\newtheorem{set2}{Satz}[section]
\newtheorem{theorem}[set2]{Theorem}
\newtheorem{definition}[set2]{Definition}
\newtheorem{lemma}[set2]{Lemma}
\newtheorem{notation[set2]}{Notation}
\newtheorem{remark}[set2]{Remark}
\newcommand{\ep}{\hfill{$\square$}}
\newenvironment{proof}[1][Proof]{\textbf{#1.} }{\
\\}
\def\XXint#1#2#3{{\setbox0=\hbox{$#1{#2#3}{\int}$}
\vcenter{\hbox{$#2#3$}}\kern-.5\wd0}}
\newcommand{\Hb}{H^2(\Omega;\R^n)}
\newcommand{\Wa}{W^{1,p}(\Omega)}
\newcommand{\bl}{\left(}
\newcommand{\br}{\right)}
\newcommand{\e}{\epsilon}
\newcommand{\di}{\,\mathrm{div}}
\newcommand{\R}{\mathbb{R}}
\newcommand{\N}{\mathbb{N}}
\newcommand{\C}{\mathcal}
\newcommand{\ol}{\overline}
\newcommand{\dx}{\,\mathrm dx}
\newcommand{\dt}{\,\mathrm dt}
\newcommand{\ds}{\,\mathrm ds}
\newcommand{\dxt}{\,\mathrm dx\,\mathrm dt}
\newcommand{\dxs}{\,\mathrm dx\,\mathrm ds}
\date{19.12.2012}
\begin{document}
\selectlanguage{english}
$\phantom{w}$
\begin{center}
\LARGE
Existence of weak solutions for
a PDE system describing phase separation and damage processes including inertial effects\footnote{This project is supported by ECMath in Berlin (Germany).}
\end{center}
\begin{center}
	Christian Heinemann$^2$, Christiane Kraus\footnote{Weierstrass Institute for Applied Analysis and Stochastics (WIAS), Mohrenstr. 39, 10117 Berlin
	\\
	E-mail: \texttt{christian.heinemann@wias-berlin.de} and
\texttt{christiane.kraus@wias-berlin.de}}
\end{center}
\begin{center}
{\it Dedicated to J\"urgen Sprekels on the occasion of his 65th birthday}
\end{center}
\begin{abstract}
	In this paper, we  consider a coupled PDE system describing phase separation and damage phenomena in elastically stressed alloys
	in the presence of inertial effects.
	The material is considered on a bounded Lipschitz domain with mixed
	boundary conditions for the displacement variable.
        The main aim of this work is to establish existence of weak solutions for the
	introduced hyperbolic-parabolic system. To this end,
	we first adopt the notion of weak solutions introduced
	in \cite{WIAS1520}. Then we
	prove existence of weak solutions by means of regularization, time-discretization
	and different variational techniques.
\end{abstract}

\noindent
{\it AMS Subject classifications}
35L20,   
35L51, 
35K86, 
35K55, 
49J40, 
49S05, 
74A45,	
74G25, 
34A12,  
82B26,  
82C26,  
35K92,   
35K35.  
\\[2mm]
{\it Keywords:} {Cahn-Hilliard system, phase separation, hyperbolic-parabolic systems, doubly nonlinear
  differential inclusions, existence results,
  energetic solutions, weak solutions,  linear elasticity, rate-dependent systems.  \\[2mm]
}


\section{Introduction}

	In micro-electronic materials such as solder alloys, different physical processes are shaping the micro-structure.
	For a realistic description of these structures, phase separation, coarsening and
	elasticity as well as damage phenomena have to be taken into account.
	A fully coupled system has been originally studied in \cite{WIAS1520}
	and further developed in \cite{WIAS1569} allowing, for instance, inhomogeneous elastic
	energy densities. The corresponding degenerating case has been analyzed in \cite{WIASxxxx}.
	To the authors' best knowledge, before these works, phase separation
	and damage processes have only been investigated independently of each
	other in the mathematical literature.

Phase separation and coarsening phenomena are usually described by
phase--field models of Cahn-Hilliard type. The evolution is
modeled by  a parabolic diffusion equation for the phase fractions.
To include elastic effects, resulting from stresses caused by different
elastic properties of the phases, Cahn-Hilliard systems are coupled
with an elliptic equation in the case of a quasi-static balance of
forces. Such coupled Cahn-Hilliard systems with elasticity are also called
Cahn-Larch\'e systems.  Since in general the mobility,
stiffness and surface tension coefficients depend on the phases (see
for instance \cite{BDM07} and \cite{BDDM09} for the explicit structure deduced by the embedded atom
method), the mathematical analysis of the coupled problem is very
complex. Existence results were derived for special cases in
\cite{Carrive00, Gar00, Pawlow}
(constant mobility, stiffness and surface tension coefficients),
in \cite{Bonetti02} (concentration dependent mobility,  two
space dimensions), \cite{SP12, SP13}  (concentration dependent surface
tension and nonlinear diffusion) and in \cite{Pawlow08} in an abstract measure-valued
setting (concentration dependent mobility and surface tension tensors).

Damage behavior, however, originates from breaking atomic links in the material
from a microscopic point of view
whereas a macroscopic theory may specify damage in the isotropic case by a
scalar-valued variable related to the proportion of damaged bonds in the
micro-structure of the material with respect to the undamaged ones.
According to the latter perspective, phase-field models are quite
common to model smooth transitions between damaged and undamaged
material states. Such phase-field models have been mainly
investigated for incomplete damage which means that damaged material cannot
loose all its elastic energy.

Existence and uniqueness results for damage models of viscoelastic materials are proven in
\cite{BSS05} for scalar-valued displacements. Higher dimensional damage models are
analytically investigated in \cite{BS04, Mielke06, MT10, KRZ11, RR12} and, there, existence
and regularity properties are shown. A coupled system describing
incomplete damage, linear elasticity and phase separation appeared
in \cite{WIAS1520, WIAS1569}. There, existence of weak solutions has
been proven under mild assumptions, where, for instance, the stiffness
tensor may be material-dependent and the chemical free energy may be
of polynomial or logarithmic type.
All these works are based on the gradient-of-damage model proposed by
Fr\'emond and Nedjar \cite{FN96} (see also \cite{Fre02}) which
describes damage as a result from microscopic movements in the solid.
The distinction between a balance law for the microscopic forces and
constitutive relations of the material yield a satisfying derivation
of an evolution law for the damage propagation from the physical point of view.
In particular, the gradient of the damage variable enters the resulting equation
and serves as a regularization term for the mathematical analysis as well as it ensures the structural size effect.
Internal constraints are ensured by the presence of non-smooth operators
(subdifferential operators) in the evolution system. Hence, in the case that the evolution of the damage is assumed to be uni-directional, i.e. the
damage process is irreversible, the microforce balance law becomes a doubly-nonlinear differential inclusion.

	The main aim of this paper is to generalize the results for
        hyperbolic-parabolic damage systems introduced
        in \cite{HK13_1} to coupled phase-field systems describing {\it phase
        separation} and {\it damage processes}
	in the presence of inertial terms with mixed boundary conditions on
        {\it non-smooth (Lipschitz) domains}. The novelty of this contribution is to obtain
        existence results for {\it phase separation} with elasticity including
        {\it inertial effects} and damage processes on Lipschitz domains.
        We first utilize and adjust the notion of weak solutions introduced
	in \cite{WIAS1520}. Then, we prove existence of weak solutions by means of regularization, time-discretization
	and different variational techniques. To this end, an energy estimate
        has, for instance, to be established and several convergence
        properties are shown.

\subsection{Energies and evolutionary equations}
Here, we qualify our model formally and postpone a rigorous
treatment to Section \ref{section:mainProofs}.
The presented model is based on two functionals, i.e.~a
generalized Ginzburg-Landau free energy functional $\mathcal E$ and a damage
pseudo-dissipation potential $\mathcal R$
(in the sense by Moreau). The free energy density $\varphi$ of
the system is given by
\begin{equation}
  \label{eq:free_energy}
  \varphi(\varepsilon(u),c,\nabla c,z,\nabla z):=
\frac{1}{p}|\nabla z|^p +
\frac{1}{2} |\nabla
  c |^2  +W(c, \varepsilon(u),z) + f(z) + \Psi(c) ,
\end{equation}
where the gradient terms  penalize spatial changes of the
variables $c$ and $z$.
$W$ denotes the elastically stored energy density
accounting for elastic deformations and damage effects, $f$ is the damage
dependent potential and $\Psi$ stands for  the
chemical energy density.

	The overall free energy ${\mathcal E}$ of Ginzburg-Landau type
                has the following structure:
		\begin{equation}
		\label{eqn:EnergyTyp1}
			\begin{split}
				&\mathcal E(u,c,z):=\int_\Omega\Big(\varphi(\varepsilon(u),c,\nabla c,z,\nabla z)+I_{[0,\infty)}(z)\Big)\,\mathrm dx.
			\end{split}
		\end{equation}
		In this context, $I_{[0,\infty)}$ signifies the indicator function of
                the subset $[0,\infty)\subseteq\mathbb R$, i.e. $ I_{[0,\infty)}(x)=0$ for $x \in
                    [0,\infty)$ and  $ I_{[0,\infty)}(x) =\infty$ for $x<0$.
                    We assume that the energy dissipation
                for the damage process is triggered by a rate-dependent
		dissipation potential ${\mathcal R}$ of the form
		\begin{equation}
		\label{eqn:EnergyTyp2}
			\begin{split}
				&{\mathcal R}(\dot z):=\int_\Omega\Big(\frac 12|\dot z|^2+I_{(-\infty,0]}(\dot z)\Big)\,\mathrm dx.
			\end{split}
		\end{equation}
			
		The governing evolutionary equations for a system state $q=(u,c,z)$ can be expressed by virtue of the
		functionals \eqref{eqn:EnergyTyp1} and \eqref{eqn:EnergyTyp2}.
		More precisely, the evolution is driven by the
		following hyperbolic-parabolic system of differential equations
		and differential inclusions:
	\begin{subequations}
	\label{eqn:PDE}
	\begin{align}
		\label{eqn:pde1}
		&\textit{diffusion:}	&&c_t=\di(m(c,z)\nabla\mu), \\
		\label{eqn:pde2}
			&&&\mu=-\Delta c+W_{,c}(c,\e(u),z)+\Psi'(c),\\
		\label{eqn:pde3} \quad
		&\textit{balance of forces:}	&&u_{tt}-\di\bl W_{,e}(c,\e(u),z)\br=l,\\
		\label{eqn:pde4}
		&\textit{damage evolution:}	&&0\in\partial_z\mathcal
			E(u,c,z)+\partial_{\dot z}\mathcal R(\partial_t z)  \quad \textit{
			or equivalently }\\
			&&&z_t -\Delta_p z+W_{,z}(c,\e(u),z)+f'(z)+\xi+\varphi=0,\\
		\label{eqn:pde5}
			&&&\xi\in\partial I_{[0,\infty)}(z),\\
		\label{eqn:pde6}
			&&&\varphi\in\partial I_{(-\infty,0]}(z_t).
	\end{align}
	\end{subequations}
        	The Cahn-Hilliard system \eqref{eqn:pde1}-\eqref{eqn:pde2} describes phase separation phenomena in alloys, the
	hyperbolic equation \eqref{eqn:pde3} formulates the balance of forces including inertial effects
	and the inclusion \eqref{eqn:pde4}-\eqref{eqn:pde6} is an evolution law for the damage processes.
	The sub-gradients correspond to the constraints that the
	damage is non-negative and irreversible.	Let us note that linear contributions in $f$ model damage activation thresholds.
	
		We choose Dirichlet conditions for the displacements $u$ on a subset $\Gamma$ of the boundary
		$\partial\Omega$ with $\mathcal H^{n-1}(\Gamma)>0$.
		Let $b:[0,T]\times\Gamma\rightarrow \mathbb R^n$ be a function
                which prescribes
		the displacements on $\Gamma$ for a fixed chosen time interval $[0,T]$.
		The imposed boundary and initial conditions and constraints
                are as follows:
               	\begin{subequations}
	        \begin{align}
		&\textit{boundary displacements}:&&u=b \text{ on }{\Gamma_\mathrm{D}}\times(0,T),\\
		&\textit{initial concentration}: &&c(0)=c^0\text{ in }\Omega,\\
		&\textit{initial displacements}: &&u(0)=u^0,\;u_t(0)=v^0\text{ in }\Omega,\\
		&\textit{initial damage}:&&z(0)=z^0\text{ in }\Omega.
	\end{align}
	\end{subequations}
		Moreover, we use natural boundary conditions for the remaining variables
		on (parts of) the boundary:
                \begin{subequations}
	\label{eqn:PDEIBC}
	\begin{flalign}
                &\qquad W_{,e}(c,\e(u),z)\cdot\nu=0&&\hspace*{-16em}\text{ on }\Gamma_\mathrm{N}\times(0,T),\\
		&\qquad \nabla c\cdot\nu=\nabla
		z\cdot\nu=m(c,z)\nabla \mu\cdot\nu=0&&\hspace*{-16em}\text{ on
		}\partial\Omega, \label{eqn:boundary2}
\end{flalign}
\end{subequations}
where $\nu$ stands for the outer unit normal to $\partial\Omega$.
		
		We like to mention that mass conservation of the system
                follows from the diffusion
                equation \eqref{eqn:pde1}
		and \eqref{eqn:boundary2}, i.e.
                $$ \int_\Omega c(t)-c^0\,\mathrm dx=0\text{ for all }t\in[0,T].$$

	In the next section, we state the precise assumptions that are needed for a rigorous analysis.
	Section \ref{section:existence} presents the main results.
	We give a notion of weak solutions evolved from \cite{HK13_1} and state the existence theorem in Subsection \ref{section:weakNotion}.
	Since the proof is based on regularization techniques, we also give the weak notion and the associated existence result for the regularized system
	in Subsection \ref{section:H2reg}.
	In the main part, Section \ref{section:mainProofs}, the existence proof is carried out first for the regularized case and
	then for the limiting case.

%
\section{Notation and assumptions}
\label{section:assumptions}

	Throughout this work, let $p>n$ be a constant and let $\Omega\subseteq\R^n$ ($n=1,2,3$) be a bounded Lipschitz domain.
	For the Dirichlet boundary $\Gamma_\mathrm{D}$ and the Neumann boundary $\Gamma_\mathrm{N}$ of $\partial\Omega$,
	we adopt the assumptions from \cite{Ber11}, i.e., $\Gamma_\mathrm{D}$ and $\Gamma_\mathrm{N}$ are non-empty and relatively open sets in
	$\partial\Omega$ with finitely many path-connected components such that $\Gamma_\mathrm{D}\cap \Gamma_\mathrm{N}=\emptyset$
	and $\ol{\Gamma_\mathrm{D}}\cup \ol{\Gamma_\mathrm{N}}=\partial\Omega$.
	
	The considered time interval is denoted
	by $[0,T]$ and  $\Omega_t:=\Omega \times [0,t]$ for $t \in [0,T]$. The
	partial derivative of a function $h$ with respect to a variable $s$ is
	abbreviated by $h_{,s}$.
	The set $\{v>0\}$ for a function $v\in W^{1,p}(\Omega)$ has to be read as $\{x\in\ol\Omega\,|\,v(x)>0\}$ by employing the embedding
	$W^{1,p}(\Omega)\hookrightarrow\C C(\ol\Omega)$ (because $p>n$).
	
	The elastic energy density $W$ is assumed to be of the form
	\begin{align}
	\label{eqn:defW}
		W(c,e,z)=\frac 12 \mathbf C(z)(e-e^*(c)):(e-e^*(c)),
	\end{align}
	where $e^*$ denotes the eigenstrain and $\mathbf C$ the material
	stiffness tensor which depends on the damage variable. For $e^*$,
	we assume the linear relation $e^*(c)= c \, \hat{e}$ with $\hat{e}\in\R_\mathrm{sym}^{n\times n}$ (Vegard's law).
	We choose the stiffness tensor function $\mathbf C \in \C C^1([0,1];\C L_\text{sym}(\R^{n\times n}))$,
	where $\C L_\text{sym}(\R^{n\times n})$ denotes the linear mappings from $\R^{n\times n}$ into $\R^{n\times n}$ which are
	symmetric.
	We also assume the properties
	\begin{align}
	\label{eqn:assumptionC}
		\mathbf C(z)e:e\geq\eta|e|^2,\qquad \mathbf C'(z)e:e\geq 0
	\end{align}
	for all $e\in\R_\mathrm{sym}^{n\times n}$, $z\in[0,1]$ and a constant $\eta>0$ independent of $e$ and $z$.

	
	Furthermore, we choose the mobility $m\in C(\R\times[0,1];\R^+)$ and
	suppose that the chemical energy density $\Psi\in \C C^1(\R)$ can be decomposed into
	$$ \Psi(c)= \Psi_1(c) + \Psi_2(c) \quad \text{for }c\in\R,$$
	where $\Psi_1,\Psi_2\in\C C^1(\R)$ with $\Psi_1$ convex and $\Psi_1\ge 0$. 
	
	In addition, we assume 
	the following growth conditions:
	\begin{subequations}
	\begin{align}
	\label{eqn:assumption_psi}
		|\Psi'(c)|&\leq C(1+|c|^{2^\star/2}), \\
	\label{eqn:assumption_psi2}
		|\Psi_2'(c)|& \le C( |c| +1)
	\end{align}
	\end{subequations}
	for all $c \in \R$.
	Moreover, the mobility function should satisfy
	\begin{align}
	\label{eqn:assumption_m}
		C_1 \leq m(c,z) &\leq C_2
	\end{align}
	for all $c \in \R$, $z \in [0,1]$.
	Here, $C_1,C_2>0$ denote constants independent of $c$ and $z$, and
	$2^\star$ is the Sobolev critical exponent.
	
	The damage dependent potential $f$ entering equation \eqref{eqn:pde4}
	is assumed to be a function of $\C C^1([0,1];\R^+)$.
	
\section{Main results}
\label{section:existence}
\subsection{Notion of weak solutions and existence results}
\label{section:weakNotion}
	In what follows we define for $k \ge 1$ the spaces	
	\begin{align*}
		W_+^{k,p}(\Omega)&:=\big\{u\in W^{k,p}(\Omega)\,|\,u\geq 0\text{ a.e. in }\Omega\big\},\\
		W_-^{k,p}(\Omega)&:=\big\{u\in W^{k,p}(\Omega)\,|\,u\leq 0\text{ a.e. in }\Omega\big\},\\
		H_{\Gamma_\mathrm{D}}^{k}(\Omega)&:=\big\{u\in H^{k}(\Omega)\,|\,u= 0\text{
	on }{\Gamma_\mathrm{D}}\text{ in the sense of traces}\big\} .
	\end{align*}
	
	Let the following initial-boundary data and volume forces be given:
	\begin{align*}
		&\textit{boundary data:}
			&&b\in H^{1}(0,T;H^2(\Omega;\R^n)\cap W^{2,1}(0,T;L^2(\Omega;\R^n)),\\
		&\textit{initial values:}
			&&c^0\in H^1(\Omega),\;u^0\in H^1(\Omega;\R^n),\;v^0\in L^2(\Omega;\R^n),\\
			&&&z^0\in W^{1,p}(\Omega) \text{ with }0\leq z^0\leq 1\text{ a.e. in }\Omega,\\
		&\textit{external volume forces:}
			&&l\in L^2(0,T;L^2(\Omega;\R^n)).
	\end{align*}
	A weak formulation of system \eqref{eqn:PDE}-\eqref{eqn:PDEIBC} is given in the following definition.
	\begin{definition}[Weak solution]
	\label{def:weakSolution}
	A weak solution of the PDE system \eqref{eqn:PDE}-\eqref{eqn:PDEIBC} for the data $(l,b,c^0,u^0,v^0,z^0)$ is a 5-tuple $(c,u,z,\mu,\xi)$ satisfying
	the following properties:
	\begin{itemize}
		\item
			spaces:
			\begin{flalign*}
				&\qquad c\in L^\infty(0,T;H^1(\Omega))\cap H^1(0,T;(H^1(\Omega))^*),\\
				&\qquad u\in L^\infty(0,T;H^1(\Omega;\R^n))\cap W^{1,\infty}(0,T;L^2(\Omega;\R^n))\cap H^2(0,T;(H_{\Gamma_\mathrm{D}}^1(\Omega;\R^n))^*)\\
				&\qquad\qquad\text{with }u=b\text{ on }{\Gamma_\mathrm{D}}\times(0,T),\;u(0)=u^0\text{ a.e. in }\Omega,\;\partial_t u(0)=v^0\text{ a.e. in }\Omega,&\\
				&\qquad z\in L^\infty(0,T;W^{1,p}(\Omega))\cap H^1(0,T;L^2(\Omega))&\\
				&\qquad\qquad\text{with }z(0)=z^0\text{ in }\Omega,\;z\geq 0\text{ a.e. in }\Omega_T,\;\partial_t z\leq 0\text{ a.e. in }\Omega_T,&\\
                                &\qquad  \mu \in L^2(0,T;H^1(\Omega)),		\\
				&\qquad \xi\in L^\infty(0,T;L^1(\Omega)).&
			\end{flalign*}
		\item
			for all $\zeta \in L^2(0,T;H^1(\Omega))\cap H^1(0,T;L^2(\Omega))$ with $\zeta(T)=0$:
			\begin{align}
			\label{eqn:weak1}
				\int_{\Omega_T}(c-c^0)\partial_t\zeta\dxt=\int_{\Omega_T}m(c,z)\nabla\mu\cdot\nabla\zeta\dxt
			\end{align}
		\item
			for all $\zeta\in L^2(0,T;H^1(\Omega))$ and for a.e. $t\in(0,T)$:
			\begin{align}
			\label{eqn:weak2}
				\int_{\Omega}\mu \, \zeta\dx=\int_{\Omega}\big(\nabla c\cdot\nabla \zeta+W_{,c}(c,\e(u),z)\zeta+\Psi'(c)\zeta\big)\dx
			\end{align}
		\item
			for all $\zeta\in H_{\Gamma_\mathrm{D}}^1(\Omega;\R^n)$ and for a.e. $t\in(0,T)$:
			\begin{align}
			\label{eqn:weak3}
				\langle \partial_{tt} u,\zeta\rangle_{H^1}+\int_\Omega W_{,e}(c,\e(u),z):\e(\zeta)\dx=\int_\Omega l\cdot\zeta\dx
			\end{align}
		\item
			for all $\zeta\in W_-^{1,p}(\Omega)$ and for a.e. $t\in(0,T)$:
			\begin{align}
			\label{eqn:weak4}
				0\leq\int_\Omega\bl|\nabla z|^{p-2}\nabla z\cdot\nabla\zeta+(W_{,z}(c,\e(u),z)+f'(z)+\partial_t z+\xi)\zeta\br\dx
			\end{align}
		\item
			for all $\zeta\in L_+^{\infty}(\Omega)$ and for a.e. $t\in(0,T)$:
			\begin{align}
			\label{eqn:weak5}
				0\geq\int_\Omega\xi(\zeta-z)\dx
			\end{align}
		\item
			total energy inequality for a.e. $t\in(0,T)$:
			\begin{align}
			\label{eqn:weak6}
				\C E(t)+\C K(t)+\C D(0,t)\leq \C E(0)+\C K(0)+\C W_\mathrm{ext}(0,t)
			\end{align}
			with
			\begin{align*}
				&\textit{free energy:}
					&&\C E(t):=\int_\Omega\bl\frac 1p|\nabla z(t)|^p+\frac 12|\nabla c(t)|^2+W(c(t),\e(u(t)),z(t))\br\dx\\
					&&&\qquad\quad+\int_\Omega\big(f(z(t))+\Psi(c(t))\big)\dx,\\
				&\textit{kinetic energy:}
					&&\C K(t):=\int_\Omega\frac 12|\partial_t u(t)|^2\dx,\\
				&\textit{dissipation:}
					&&\C D(0,t):=\int_{\Omega_t}\big(|\partial_t z|^2+m(c,z)|\nabla\mu|^2\big)\dxs,\\
				&\textit{external work:}
					&&\C W_\mathrm{ext}(0,t):=
					\int_{\Omega_t} W_{,e}(c,\e(u),z):\e(\partial_t b)\dxs\\
					&&&\qquad\qquad\quad\;-\int_{\Omega_t}\partial_{t}u\cdot\partial_{tt} b\dxs
					+\int_{\Omega_t}l\cdot (\partial_t u-\partial_t b)\dxs\\
					&&&\qquad\qquad\quad\;-\int_\Omega v^0\cdot \partial_t b^0\dx
					+\int_\Omega \partial_t u(t)\cdot \partial_t b(t)\dx.
			\end{align*}
	\end{itemize}
	\end{definition}
	
	\begin{remark}
		Let $(c,u,z,\mu,\xi)$ be a weak solution. Furthermore, if additionally
		\begin{align*}
			&c\in H^1(0,T;H^1(\Omega)),\quad u\in H^1(0,T;H^1(\Omega;\R^n)),\quad
			z\in H^1(0,T;W^{1,p}(\Omega)),
		\end{align*}
		then for a.e. $t\in(0,T)$
		\begin{align*}
			\qquad z_t&-\Delta_p z+W_{,z}(c,\e(u),z)+f'(z)+\xi+\varphi=0\text{ in }\bl W^{1,p}(\Omega)\br^*,\\
			\qquad \xi&\in\partial I_{W_+^{1,p}(\Omega)}(z),\\
			\qquad \varphi&\in\partial I_{W_-^{1,p}(\Omega)}(\partial_t z).
		\end{align*}
		Moreover, the energy inequality \eqref{eqn:weak6} becomes an energy balance.
	\end{remark}
	
	The main aim of this work is to prove existence of weak solutions in the sense above.
	\begin{theorem}
	\label{theorem:mainResult}
		Let the assumptions in Section \ref{section:assumptions} be satisfied.
		To the given data $l$, $b$, $c^0$, $u^0$, $v^0$, $z^0$, there exists a weak solution of system \eqref{eqn:PDE}-\eqref{eqn:PDEIBC}
		in the sense of Definition \ref{def:weakSolution}.
	\end{theorem}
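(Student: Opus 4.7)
The plan is to follow the strategy indicated by the authors: a two-level approximation consisting of a higher-order regularization in $u$ together with a time-discrete incremental scheme, after which one passes first to the time-continuous limit (for each fixed regularization parameter) and then removes the regularization. The regularization should add a term of the form $\frac{\varepsilon}{2}\|u\|_{H^2}^2$ (or $\varepsilon\Delta^2 u$ in the momentum equation) to the elastic part, so that $u$ lives in $H^2(\Omega;\R^n)$ at the approximate level. This lifts the regularity of $\varepsilon(u)$ into $H^1$, which, together with the $W^{1,p}$ regularity of $z$ (recall $p>n$, giving $z\in \mathcal{C}(\overline\Omega)$), makes the nonlinear terms $W_{,e}$, $W_{,c}$, $W_{,z}$ well defined and tractable under weak convergence.

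At time step $k$, given $(u^{k-1},u^k,c^k,z^k)$, one defines $(c^{k+1},\mu^{k+1},u^{k+1},z^{k+1})$ by a semi-implicit scheme: the damage $z^{k+1}$ is obtained as a minimizer of the (regularized) energy $\mathcal E_\varepsilon(u^k,c^k,\cdot) + \tau\mathcal R\bigl(\tfrac{\,\cdot\,-z^k}{\tau}\bigr)$ over $W^{1,p}_+(\Omega)$ constrained by $\,\cdot\,\le z^k$ (this encodes both non-negativity and irreversibility and produces $\xi^{k+1}$ as a Lagrange multiplier); the concentration $(c^{k+1},\mu^{k+1})$ is obtained from a discrete Cahn--Hilliard step with mobility $m(c^k,z^{k+1})$; and the displacement $u^{k+1}$ is determined from the discrete hyperbolic equation
\begin{equation*}
\int_\Omega \frac{u^{k+1}-2u^k+u^{k-1}}{\tau^2}\cdot\zeta\dx
+\int_\Omega W_{,e}(c^{k+1},\e(u^{k+1}),z^{k+1}):\e(\zeta)\dx
+\varepsilon\bigl(u^{k+1},\zeta\bigr)_{H^2}
=\int_\Omega l^{k+1}\cdot\zeta\dx.
\end{equation*}
Existence for each sub-problem follows from direct methods (strict convexity of $\mathcal E$ in $u$ at fixed $(c,z)$ by \eqref{eqn:assumptionC}, convexity in $\dot z$ of $\mathcal R$, and standard Cahn--Hilliard arguments). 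Testing each equation with the natural discrete increment produces a discrete energy inequality that mimics \eqref{eqn:weak6}; summing it over $k$ bounds $\|\nabla z^k\|_{L^p}$, $\|z^k_t\|_{L^2}$, $\|\nabla c^k\|_{L^2}$, $\|\mu^k\|_{H^1}$, $\|u^k_t\|_{L^2}$, and $\|\e(u^k)\|_{L^2}$ uniformly in $\tau$ and $\varepsilon$.

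Interpolating the discrete values in time yields sequences $(c_\tau,u_\tau,z_\tau,\mu_\tau,\xi_\tau)$. Standard weak/weak-$\star$ compactness plus Aubin--Lions provides strong convergence of $c_\tau$ in $L^2(0,T;L^2)$ and, using the $H^2$ regularization, strong convergence of $\e(u_\tau)$ in $L^2(0,T;L^2)$; the $W^{1,p}$ bound on $z_\tau$ combined with $\partial_t z_\tau\in L^2$ yields uniform convergence of $z_\tau$. These strong convergences allow the identification of the nonlinear limits $W_{,c}$, $W_{,e}$, $W_{,z}$, $\Psi'(c)$, $m(c,z)$, $f'(z)$; the monotone terms $|\nabla z|^{p-2}\nabla z$ and $\xi$ are handled by Minty-type arguments (for $\xi$ one passes to the limit in the variational inequality \eqref{eqn:weak5} via liminf arguments on non-negative test functions and the non-negativity of $z_\tau$). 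The energy inequality \eqref{eqn:weak6} follows from the discrete one by weak lower semicontinuity of the convex functionals and strong convergence in the work terms, giving a weak solution of the $\varepsilon$-regularized problem. A second limit $\varepsilon\to 0$ is then carried out along the same lines, once one has checked that all $\varepsilon$-independent bounds from the energy inequality survive and that the extra $\varepsilon\|u_\varepsilon\|_{H^2}^2$ term vanishes in the limit.

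The main obstacle is the coupling of the hyperbolic momentum equation with the Cahn--Hilliard and damage evolutions: to pass to the limit in $W_{,e}(c,\e(u),z):\e(\zeta)$ and in $W_{,z}(c,\e(u),z)$ (occurring in both \eqref{eqn:weak4} and the energy inequality) one needs strong convergence of $\e(u)$ in $L^2(0,T;L^2)$, which is delicate because the inertial term provides no viscous regularization. This is exactly why the $H^2$-regularization is introduced at the first level; removing it in the second limit requires to derive strong convergence of $\e(u_\varepsilon)$ from the limit energy identity itself, for instance by combining the $\liminf$ inequality for $\int W$ with the energy inequality tested against the candidate limit, in the spirit of the argument in \cite{HK13_1}. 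A second delicate point is the identification of the subgradient multiplier $\xi$, since $z$ and $\xi$ are only weakly compact; here the $W^{1,p}$ bound with $p>n$ (hence uniform convergence of $z$) is crucial to pass to the limit in \eqref{eqn:weak5}.
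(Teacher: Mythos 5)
Your overall architecture (time discretization plus an $H^2$-regularization of the displacement, then the limits $\tau\searrow 0$ and $\delta\searrow 0$ in succession) coincides with the paper's, and the first limit is described essentially correctly: the $H^2$ bound gives strong convergence of $\e(u_\tau)$ in $L^2(\Omega_T)$ via Aubin--Lions, which is exactly how the paper identifies the nonlinear terms at that stage. (Two structural remarks: the paper determines the time-discrete triple $(c^k_\tau,u^k_\tau,z^k_\tau)$ by minimizing a \emph{single} joint functional $\C F^k_\tau$ rather than by your staggered scheme, and it also regularizes the Cahn--Hilliard part by adding $\delta\,\partial_t c$ to the chemical potential, which yields $c\in H^1(0,T;L^2(\Omega))$ at the regularized level; your scheme omits this.)

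The genuine gap is in your treatment of the second limit $\delta\searrow 0$. You propose to recover strong $L^2$ convergence of $\e(u_\delta)$ ``from the limit energy identity itself'' in order to pass to the limit in the quadratic term $W_{,z}(c,\e(u),z)=\tfrac12\mathbf C'(z)(\e(u)-e^*(c)){:}(\e(u)-e^*(c))$. This step would fail: only an energy \emph{inequality} survives the limit, so no convergence of norms can be extracted from it, and the purely hyperbolic momentum balance offers no mechanism for strong convergence of strains. The paper never proves such strong convergence. Instead it exploits the sign structure: for the force balance, $W_{,e}$ is \emph{linear} in $\e(u)$, so weak convergence of $\e(u_\delta)$ together with uniform convergence of $z_\delta$ suffices; for the damage inequality, it writes the quadratic term as $|F_\delta|^2$ with $F_\delta=\chi_{\{z_\delta>0\}}\sqrt{\mathbf C'(z_\delta)/2}\,(\e(u_\delta)-e^*(c_\delta))$ (and an analogous $G_\delta$ on $\{z_\delta=0\}$), extracts weak $L^2$ limits $F,G$ and weak-$\star$ limits $\chi,\eta$ of the indicator functions, uses that the test functions satisfy $-\zeta\geq 0$ and $\mathbf C'\geq 0$ so that $\int |F_\delta|^2(-\zeta)$ is weakly lower semicontinuous, identifies $F$ pointwise on $\{z>0\}$ via the uniform convergence of $z_\delta$, and finally \emph{constructs} the subgradient as $\xi=\chi_{\{z=0\}}\min\{0,(F^2+G^2)+(\chi+\eta-1)f'(z)-W_{,z}(c,\e(u),z)\}$. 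Without this (or an equivalent) device, your argument does not close: neither the variational inequality \eqref{eqn:weak4} nor the identification of $\xi$ can be obtained from weak convergence of $\e(u_\delta)$ alone. The energy inequality itself is unproblematic, since $W$ is convex in $e$ and lower semicontinuity goes in the right direction there.
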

	
\subsection{Notion of weak solutions for a regularized system and existence results}
\label{section:H2reg}
	We will first study a regularized version of our phase separation-damage model. The passage to the limit is performed in Section \ref{section:limit}.
	The regularization is needed in the existence proof in the first instance to pass from the time-discrete to the time-continuous system.
	
	The regularized PDE system for $\delta>0$ is given by
	\begin{flalign*}
		\qquad c_t&=\di(m(c,z)\nabla\mu),\\
		\qquad \mu&=-\Delta c+W_{,c}(c,\e(u),z)+\Psi'(c)+\delta c_t,\\
		\qquad u_{tt}&-\di\bl W_{,e}(c,\e(u),z)\br+\delta Au=l,&\\
		\qquad z_t&-\Delta_p z+W_{,z}(c,\e(u),z)+f'(z)+\xi+\varphi=0,&\\
		\qquad \xi&\in\partial I_{[0,\infty)}(z),&\\
		\qquad \varphi&\in\partial I_{(-\infty,0]}(z_t),&
	\end{flalign*}
	where the linear operator $A:H^2(\Omega;\R^n)\to (H^2(\Omega;\R^n))^*$ is defined as
	$$
		\langle Au,v\rangle_{H^2}
		:=\int_\Omega\langle\nabla(\nabla u),\nabla(\nabla v)\rangle_{\R^{n\times n\times n}}\dx:=
			\sum_{1\leq i,j,k\leq n}\int_\Omega \frac{\mathrm d^2 u_k}{\mathrm dx_i\mathrm d x_j}\frac{\mathrm d^2 v_k}{\mathrm dx_i\mathrm d x_j}\dx.
	$$
	A weak formulation of the regularized system such as in Definition \ref{def:weakSolution} can be
	obtained with the corresponding modifications including the
	$\delta$-terms.
	\begin{definition}[Weak solution of the regularized system]
	\label{def:regWeakSolution}
	A weak solution of the regularized PDE system for the data $(l,b,c^0,u^0,v^0,z^0)$ is a 5-tuple $(c,u,z,\mu,\xi)$ satisfying
	the following properties:
	\begin{itemize}
		\item
			spaces:
			\begin{flalign*}
				&\qquad c\in L^\infty(0,T;H^1(\Omega))\cap H^1(0,T;L^2(\Omega)),\\
				&\qquad\qquad\text{with }c(0)=c^0\text{ a.e. in }\Omega,\\
				&\qquad u\in L^\infty(0,T;H^2(\Omega;\R^n))\cap W^{1,\infty}(0,T;L^2(\Omega;\R^n))\cap H^2(0,T;(H_{\Gamma_\mathrm{D}}^2(\Omega;\R^n))^*)\\
				&\qquad\qquad\text{with }u=b\text{ on }{\Gamma_\mathrm{D}}\times(0,T),\;u(0)=u^0\text{ a.e. in }\Omega,\;\partial_t u(0)=v^0\text{ a.e. in }\Omega,&\\
				&\qquad z\in L^\infty(0,T;W^{1,p}(\Omega))\cap H^1(0,T;L^2(\Omega))&\\
				&\qquad\qquad\text{with }z(0)=z^0\text{ in }\Omega,\;z\geq 0\text{ a.e. in }\Omega_T,\;\partial_t z\leq 0\text{ a.e. in }\Omega_T,&\\
				&\qquad  \mu \in L^2(0,T;H^1(\Omega)),\\
				&\qquad \xi\in L^\infty(0,T;L^1(\Omega)).&
			\end{flalign*}
		\item
			for all $\zeta\in H^1(\Omega)$ and for a.e. $t\in(0,T)$:
			\begin{align}
			\label{eqn:weak1_delta}
				\int_{\Omega_T}(\partial_t c)\,\zeta\dxt=-\int_{\Omega_T}m(c,z)\nabla\mu\cdot\nabla\zeta\dxt
			\end{align}
		\item
			for all $\zeta\in H^1(\Omega)$ and for a.e. $t\in(0,T)$:
			\begin{align}
			\label{eqn:weak2_delta}
				\int_{\Omega}\mu \, \zeta\dx=\int_{\Omega}\big(\nabla c\cdot\nabla \zeta+W_{,c}(c,\e(u),z)\zeta+\Psi'(c)\zeta
				+\delta \,(\partial_t c)\, \zeta \big)\dx
			\end{align}
		\item
			for all $\zeta\in H_{\Gamma_\mathrm{D}}^1(\Omega;\R^n)$ and for a.e. $t\in(0,T)$:
			\begin{align}
			\label{eqn:weak3_delta}
				\langle \partial_{tt}u,\zeta\rangle_{H^1}+\int_\Omega W_{,e}(c,\e(u),z):\e(\zeta)\dx
				  + \delta  \langle Au ,\zeta\rangle_{H^2}=\int_\Omega l\cdot\zeta\dx
			\end{align}
		\item
			for all $\zeta\in W_-^{1,p}(\Omega)$ and for a.e. $t\in(0,T)$:
			\begin{align}
			\label{eqn:weak4_delta}
				0\leq\int_\Omega\bl|\nabla z|^{p-2}\nabla z\cdot\nabla\zeta+(W_{,z}(c,\e(u),z)+f'(z)+\partial_t z+\xi)\zeta\br\dx
			\end{align}
		\item
			for all $\zeta\in L_+^{\infty}(\Omega)$ and for a.e. $t\in(0,T)$:
			\begin{align}
			\label{eqn:weak5_delta}
				0\geq\int_\Omega\xi(\zeta-z)\dx
			\end{align}
		\item
			total energy inequality for a.e. $t\in(0,T)$:
			\begin{align}
			\label{eqn:weak6_delta}
				\C E(t)+\C K(t)+\C D(0,t)\leq \C E(0)+\C K(0)+\C W_\mathrm{ext}(0,t)
			\end{align}
			with
			\begin{align*}
				&\textit{free energy:}
					&&\C E(t):=\int_\Omega\bl\frac 1p|\nabla z(t)|^p+\frac 12|\nabla c(t)|^2+W(c(t),\e(u(t)),z(t))\br\dx\\
					&&&\qquad\quad+\int_\Omega\big(f(z(t))+\Psi(c(t))\big)\dx
					  + \frac{\delta}{2} \langle A u_\tau(t) , u_\tau(t)\rangle_{H^2},\\
				&\textit{kinetic energy:}
					&&\C K(t):=\int_\Omega\frac 12|\partial_t u(t)|^2\dx,\\
				&\textit{dissipation:}
					&&\C D(0,t):=\int_{\Omega_t}\big(|\partial_t z|^2+ \delta|\partial_t c|^2+m(c,z)|\nabla\mu|^2\big)\dxs,\\
				&\textit{external work:}
					&&\C W_\mathrm{ext}(0,t):=
					\int_{\Omega_t} W_{,e}(c,\e(u),z):\e(\partial_t b)\dxs\\
					&&&\qquad\qquad\quad\;+\delta\int_0^t\langle A u(s),\partial_t b(s)\rangle_{H^2}\ds\\
					&&&\qquad\qquad\quad\;-\int_{\Omega_t}\partial_{t}u\cdot\partial_{tt} b\dxs
					+\int_{\Omega_t}l\cdot (\partial_t u-\partial_t b)\dxs\\
					&&&\qquad\qquad\quad\;-\int_\Omega v^0\cdot \partial_t b^0\dx
					+\int_\Omega \partial_t u(t)\cdot \partial_t b(t)\dx.
			\end{align*}
	\end{itemize}
	\end{definition}
	The proof of the main result, see Theorem \ref{theorem:mainResult}, is
	based on  the existence of weak solutions for the regularized system.
	\begin{theorem}
	\label{theorem:mainResult_delta}
		Let the assumptions in Section \ref{section:assumptions} be satisfied.
		To the given data $l$, $b$, $c^0$, $u^0$, $v^0$, $z^0$, there exists a weak solution of the regularized system
		in the sense of Definition \ref{def:regWeakSolution}.
	\end{theorem}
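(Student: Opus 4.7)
The plan is to construct a weak solution of the regularized system by Rothe time-discretization combined with a variational scheme at each step, followed by passage to the limit $\tau\to 0$. The regularization is tailored precisely so that the needed compactness is at hand: the $\delta Au$-term forces $u\in L^\infty(0,T;H^2(\Omega;\R^n))$ which, together with $W^{1,\infty}(0,T;L^2)$, yields via Aubin--Lions strong convergence of $\e(\tilde u^\tau)$ in $C([0,T];L^2)$, while the $\delta c_t$-term gives $c\in H^1(0,T;L^2)$ and hence strong convergence of $\tilde c^\tau$ in $L^2(0,T;L^q)$ for some $q>2$. Both facts are crucial for passing to the limit in the nonlinear products $W_{,c}$, $W_{,e}$, $W_{,z}$, and $\Psi'(c)$.

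First I would partition $[0,T]$ into $M$ steps of size $\tau=T/M$ and solve a cascaded incremental problem. Given $(u^{k-1},v^{k-1},c^{k-1},z^{k-1})$, I first determine $z^k$ as a minimizer of
\[
 z\mapsto \int_\Omega\bl\tfrac1p|\nabla z|^p + W(c^{k-1},\e(u^{k-1}),z) + f(z) + \tfrac{1}{2\tau}(z-z^{k-1})^2\br\dx
\]
over the weakly closed convex admissible set $\{z\in W^{1,p}_+(\Omega):z\leq z^{k-1}\text{ a.e.}\}$; the direct method applies since $p>n$, and the associated Euler--Lagrange variational inequality produces the discrete version of \eqref{eqn:weak4_delta}--\eqref{eqn:weak5_delta} with a multiplier $\xi^k\in L^1(\Omega)$. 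Using this $z^k$, I would then jointly solve for $(u^k,c^k,\mu^k)$ the Euler--Lagrange system of an incremental functional combining the discrete kinetic energy $\tfrac12\|(u-u^{k-1})/\tau-v^{k-1}\|_{L^2}^2$, the free energy $\mathcal E(u,c,z^k)$, the regularizations $\tfrac{\delta}{2}\langle Au,u\rangle_{H^2}$ and $\tfrac{\delta}{2\tau}\|c-c^{k-1}\|_{L^2}^2$, and the external work. Coercivity follows from \eqref{eqn:assumptionC}, the convexity of $\Psi_1$ and the growth \eqref{eqn:assumption_psi2}. I set $v^k:=(u^k-u^{k-1})/\tau$, and $\mu^k$ is read off from the discrete analogue of \eqref{eqn:weak2_delta}.

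Testing these discrete equations with the natural increments $u^k-u^{k-1}-\tau\partial_t b^k$, $\tau\mu^k$, and $z^k-z^{k-1}$ and telescoping yields a discrete analogue of \eqref{eqn:weak6_delta}, providing uniform bounds on the piecewise-affine and piecewise-constant interpolants in exactly the spaces prescribed in Definition \ref{def:regWeakSolution}; using $\zeta\equiv 1$ in the discrete version of \eqref{eqn:weak2_delta} together with a Poincar\'e inequality yields the $L^2(0,T;H^1)$-bound on $\bar\mu^\tau$. Aubin--Lions then delivers strong convergence of $\tilde u^\tau$ in $C([0,T];H^1)$ and hence of $\e(\tilde u^\tau)$ in $C([0,T];L^2)$, of $\tilde c^\tau$ in $C([0,T];L^q)$ for some $q>2$, and of $\tilde z^\tau$ in $C([0,T];C(\ol\Omega))$. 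These convergences suffice to pass to the limit in each of \eqref{eqn:weak1_delta}--\eqref{eqn:weak5_delta}: the constraints $z\geq 0$ and $\partial_t z\leq 0$ are preserved by pointwise convergence together with $z^k\leq z^{k-1}$; the bound $\xi\in L^\infty(0,T;L^1(\Omega))$ is obtained by inserting a fixed strictly positive test function into the discrete damage inequality; finally, \eqref{eqn:weak6_delta} follows by lower semicontinuity on the left and strong convergence on the right of the discrete energy inequality.

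The hardest step will be obtaining strong convergence of $\e(\tilde u^\tau)$, without which the nonlinear driving force $W_{,z}(c,\e(u),z)$ in \eqref{eqn:weak4_delta} cannot be identified in the limit: the bounds $\tilde u^\tau\in L^\infty(0,T;H^1)$ and $\partial_t\tilde u^\tau\in L^\infty(0,T;L^2)$ alone would be insufficient for compactness of $\e(u)$, and this is precisely where the $\delta Au$-regularization is indispensable and motivates the detour through the regularized system. A second delicate point is the reconciliation of the discrete second-order time difference in the hyperbolic step with the kinetic-energy term in \eqref{eqn:weak6_delta}: summation by parts produces boundary contributions at $t=0$ and $t=t_k$ that must match exactly the terms $-\int_\Omega v^0\cdot\partial_t b(0)\dx$ and $\int_\Omega\partial_t u(t)\cdot\partial_t b(t)\dx$ appearing in $\mathcal W_\mathrm{ext}$.
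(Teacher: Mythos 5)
Your overall strategy coincides with the paper's: a semi-implicit time discretization solved by incremental minimization, a discrete energy estimate obtained by testing with the increments $u^k-u^{k-1}-(b^k-b^{k-1})$, $\tau\mu^k$, $c^k-c^{k-1}$, $z^k-z^{k-1}$, Aubin--Lions compactness exploiting the $\delta$-regularizations, and a limit passage $\tau\searrow 0$. Your staggered scheme (first $z^k$ with frozen $(c^{k-1},u^{k-1})$, then $(u^k,c^k)$) differs from the paper's joint minimization of one functional $\C F^k_\tau(c,u,z)$, but this is a legitimate variant. However, there are two genuine gaps.

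First, you claim that the convergences delivered by Aubin--Lions ``suffice to pass to the limit in each of \eqref{eqn:weak1_delta}--\eqref{eqn:weak5_delta}.'' They do not suffice for \eqref{eqn:weak4_delta}: the term $\int_\Omega|\nabla z_\tau|^{p-2}\nabla z_\tau\cdot\nabla\zeta\dx$ requires identifying the weak $L^{p'}$-limit of $|\nabla z_\tau|^{p-2}\nabla z_\tau$, and none of your listed convergences controls $\nabla z_\tau$ beyond weak-$\star$ boundedness in $L^\infty(0,T;L^p)$; uniform convergence of $z_\tau$ in $C(\ol{\Omega_T})$ says nothing about the gradient. A Minty-type monotonicity argument is obstructed here by the unilateral constraints $0\leq z\leq z_\tau^{k-1}$, which prevent testing with $z_\tau-z$. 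The paper devotes a separate lemma (Lemma \ref{lemma:strongConvZ}, imported from the cited damage paper) to proving \emph{strong} convergence of $\nabla z_\tau$ in $L^p(\Omega_T;\R^n)$ by a dedicated approximation argument using $\mathbf C'\geq 0$; without this ingredient your limit passage in the damage inequality does not close.

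Second, your treatment of the subgradient $\xi$ is not viable as stated. Extracting a multiplier $\xi^k\in L^1(\Omega)$ at the discrete level with only a uniform $L^\infty(0,T;L^1(\Omega))$ bound gives no weak compactness: bounded sets in $L^1$ are only weakly-$\star$ precompact in the space of measures, so the limit object need not be a function and cannot be identified with the $\xi$ required by \eqref{eqn:weak4_delta}--\eqref{eqn:weak5_delta}. The paper avoids a discrete multiplier altogether: the time-discrete problem is kept as a variational inequality over the constraint set (cf.~\eqref{eqn:discrPde4}), and $\xi$ is constructed \emph{explicitly in the limit} by the formula $\xi=-\chi_{\{z=0\}}\max\{0,W_{,z}(c,\e(u),z)+f'(z)\}$, using weak limits of characteristic functions such as $\chi_{\{z_\delta>0\}}$ and the quadratic structure of $W_{,z}$; this is where the measurability and integrability of $\xi$ actually come from. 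Finally, you should also acknowledge that the discrete energy inequality carries error terms (the analogues of $e_\tau^1,\dots,e_\tau^4$ in Lemma \ref{lemma:energy_time_discrete}) stemming from the failure of the discrete chain rule for the non-convex couplings in $W$, $f$ and $\Psi_2$; showing that these vanish as $\tau\searrow0$ requires the Taylor-remainder and generalized dominated convergence arguments carried out in the paper, not merely ``lower semicontinuity on the left and strong convergence on the right.''
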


\section{Proof of the existence theorems}
\label{section:mainProofs}
\subsection{Existence proof for the regularized system}
	For the existence proof of the regularized system, we will use a
	semi-implicit Euler scheme solved by a recursive minimization
	procedure.

	Let $\tau>0$ denote the discretization fineness and let
	$M_\tau:=\lfloor T/\tau \rfloor$ be the number of discrete time
	points. We fix a $k\in{1,\ldots, M_\tau}$ and define the functional $\C
	F_\tau^k:H^1(\Omega)\times H^2(\Omega;\R^n)\times W^{1,p}(\Omega)\to\R$ by
	\begin{align*}
		\C F_\tau^k(c,u,z):={}&\int_\Omega\bl\frac 1p|\nabla z|^p+\frac 12|\nabla  c|^2+W(c,\e(u),z)+f(z)+ \Psi(c)-l(k\tau)\cdot u\br\dx\\
				      & +\frac\delta2\langle A^{k-1}u,u\rangle_{H^2}+\frac\tau2\left\|\frac{z-z_\tau^{k-1}}{\tau}\right\|_{L^2}^2
					+\frac{\tau^2}{2}\left\|\frac{u-2u_\tau^{k-1}+u_\tau^{k-2}}{\tau^2}\right\|_{L^2}^2\\
				      & +\frac{1}{2\tau}\left\|\frac{c-c_\tau^{k-1}}{\tau}\right\|_{V_0}^2
					+\frac{\delta}{2\tau}\left\|\frac{c-c_\tau^{k-1}}{\tau}\right\|_{L^2}^2,
	\end{align*}
	where $V_0= \{ \zeta \in (H^1(\Omega))^* | \langle \zeta, {\bf 1} \rangle_{(H^1)^* \times H^1}=0 \}$. Note that the inverse
	operator
	$A^{k-1,-1}:V_0 \to U_0:=\{ \zeta \in (H^1(\Omega))|  \int_\Omega \zeta \dx =0 \}$ of the operator $A^{k-1}: U_0 \to V_0$ given by
	$$
		u \mapsto \langle \nabla u,
		m(c_\tau^{k-1},z_\tau^{k-1})\nabla \cdot \,  \rangle_{L^2}
	$$
	is well defined.  The space $V_0$ is endowed with the
	scalar product
	$$
		\langle u, v \rangle_{V_0}:= \langle \nabla (A^{-1} u), m(c_\tau^{k-1},z_\tau^{k-1})\nabla (A^{-1}v) \rangle_{L^2}.
	$$
	We refer to \cite{Gar00} for details.

	A minimizer of $\C F_\tau^k$ in the subspace
	\begin{multline}
		\bigg\{c\in H^1(\Omega) \;|\; \int_\Omega (c-c^0) \dx=0\dx\bigg\}\times \Big\{u\in \Hb\;|\;u|_{\Gamma_\mathrm{D}}=b(\tau k)|_{\Gamma_\mathrm{D}}\Big\}\\
		\times \Big\{z\in \Wa\;|\;0\leq z\leq z_\tau^{k-1}\Big\}
	\end{multline}
	obtained by the direct method in the calculus of variations is denoted
        by $(c^k_\tau, u_\tau^k,z_\tau^k)$. More precisely, by a recursive minimization procedure starting from the initial values $(c^0,u^0,z^0)$ and $u^{-1}:=u^0-\tau v^0$,
	we obtain
	functions $(c_\tau^k, u_\tau^k,z_\tau^k)$ for $k=0,\ldots,M_\tau$.
	The velocity field $v_\tau^k$ is set to $(u_\tau^k-u_\tau^{k-1})/\tau$
	and $b_\tau^k$ and $l_\tau^k$ are given by $b(\tau k)$ and $l(\tau k)$.
	
	Let $w_\tau^k\in\{l_\tau^k,b_\tau^k,c^k_\tau,
	u_\tau^k,v_\tau^k,z_\tau^k, \mu_\tau^k \}$, we introduce the piecewise constant
	interpolations $w_\tau$, $w_\tau^-$
	and the linear interpolation $\widehat w_\tau$ with respect to time as
	\begin{align*}
		w_\tau(t)&:=w_\tau^k&&\text{ with }k=\left\lceil t/\tau\right\rceil,\\
		w_\tau^-(t)&:=w_\tau^{\max\{0,k-1\}}&&\text{ with }k=\left\lceil t/\tau\right\rceil,\\
		\widehat w_\tau(t)&:=\beta w_\tau^k+(1-\beta)w_\tau^{\max\{0,k-1\}}&&\text{ with }
			k=\left\lceil t/\tau\right\rceil,\;\beta=\frac{t-(k-1)\tau}{\tau}
	\end{align*}
	and the piecewise constant functions $t_\tau$ and $t_\tau^-$ as
	\begin{align*}
		t_\tau&:=\left\lceil t/\tau\right\rceil\tau=\min\{k\tau\,|\,k\in\mathbb N_0\text{ and }k\tau\geq t\},\\
		t_\tau^-&:=\max\{0,t_\tau-\tau\}.
	\end{align*}
	We would like to remark that, by definition, $w_\tau(t)=w_\tau(t_\tau)$ for all $t\in[0,T]$ and
	\begin{align*}
		\partial_t \widehat v_\tau(t)=\frac{u_\tau^k-2u_\tau^{k-1}+u_\tau^{k-2}}{\tau^2}
	\end{align*}
	for $t\in \left\lceil t/\tau\right\rceil$.
	
	Since the functions $(c^k_\tau, u_\tau^k,z_\tau^k)$ are minimizers, we
	obtain the following necessary conditions (Euler-Lagrange equations)
	by direct methods in the calculus of variations, cf.~\cite{WIAS1520,
	WIASxxxx, HK13_1}:
	\begin{lemma}
	\label{lemma:time_discrete}
		There exists a time-discrete weak solution in the following sense:
		\begin{itemize}
			\item
				spaces:
				\begin{align*}
					&c_\tau,c_\tau^-\in L^\infty(0,T;H^1(\Omega)),&&\widehat c_\tau\in W^{1,\infty}(0,T;H^1(\Omega)),\\
					&u_\tau,v_\tau\in L^\infty(0,T;\Hb),&&\widehat u_\tau,\widehat v_\tau\in W^{1,\infty}(0,T;H^2(\Omega;\R^n)),\\
					&z_\tau,z_\tau^-\in L^\infty(0,T;\Wa),&&\widehat z_\tau\in W^{1,\infty}(0,T;\Wa),\\
					&\mu_\tau\in L^\infty(0,T;H^1(\Omega)),
				\end{align*}
				with
				\begin{align*}
					&c_\tau(0)=c^0\text{ a.e. in }\Omega,\;u_\tau(0)=u^0\text{ a.e. in }\Omega,\;z_\tau(0)=z^0\text{ in }\Omega,\;v_\tau(0)=v^0\text{ a.e. in }\Omega,\\
					&u_\tau=b_\tau\text{ on }{\Gamma_\mathrm{D}}\times(0,T),\;z_\tau\geq 0\text{ a.e. in }\Omega_T,\;\partial_t \widehat z_\tau\leq 0\text{ a.e. in }\Omega_T,
				\end{align*}
			\item
				for all $\zeta\in L^2(0,T;H^1(\Omega))$:
				\begin{align}
				\label{eqn:discrPde1}
					\int_{\Omega_T}(\partial_t \widehat c_\tau)\zeta\dxt=-\int_{\Omega_T} m(c_\tau^-,z_\tau^-)\nabla\mu_\tau\cdot\nabla\zeta\dxt,
				\end{align}
			\item
				for all $\zeta\in H^1(\Omega)$ and for a.e. $t\in(0,T)$:
				\begin{align}
				\label{eqn:discrPde2}
			       \int_{\Omega}\mu_\tau\zeta\dx =	\int_{\Omega}\big(\nabla c_\tau\cdot\nabla \zeta+W_{,c}(c_\tau,\e(u_\tau),z_\tau)\zeta+\Psi'(c_\tau)\zeta
						+\delta(\partial_t \widehat c_\tau)\zeta\big)\dx ,
				\end{align}
			\item
				for all $\zeta\in H_{\Gamma_\mathrm{D}}^2(\Omega;\R^n)$ and for a.e. $t\in(0,T)$:
				\begin{align}
				\label{eqn:discrPde3}
					\int_\Omega\partial_t\widehat v_\tau\cdot\zeta\dx+\int_\Omega W_{,e}(c_\tau,\e(u_\tau),z_\tau):\e(\zeta)\dx+\delta \langle Au_\tau,\zeta\rangle_{H^2}
						=\int_\Omega l_\tau\cdot\zeta\dx,
				\end{align}
			\item
				for a.e. $t\in(0,T)$ and for all $\zeta\in \Wa$ with $0\leq\zeta+z_\tau(t)\leq z_\tau^-(t)$:
				\begin{align}
				\label{eqn:discrPde4}
					0\leq\int_\Omega\bl|\nabla z_\tau|^{p-2}\nabla z_\tau\cdot\nabla\zeta+(W_{,z}(c_\tau,\e(u_\tau),z_\tau)+f'(z_\tau)+\partial_t\widehat z_\tau)\zeta\br\dx.
				\end{align}
		\end{itemize}
	\end{lemma}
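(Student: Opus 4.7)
The plan is to construct $(c_\tau^k,u_\tau^k,z_\tau^k)$ at each step $k$ by the direct method applied to $\C F_\tau^k$ on its admissible set, and then read off the weak formulations \eqref{eqn:discrPde1}--\eqref{eqn:discrPde4} as Euler--Lagrange conditions along suitably chosen variations. The interpolation identities for $w_\tau$, $w_\tau^-$, $\widehat w_\tau$ and the corresponding function-space memberships are then immediate from their definitions.

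\emph{Existence of minimizers.} The admissible set is nonempty: by induction, $(c_\tau^{k-1},b(k\tau),z_\tau^{k-1})$ lies in it. Coercivity of $\C F_\tau^k$ rests on standard ingredients: the term $\frac{1}{p}|\nabla z|^p$ combined with the box constraint $0\le z\le z_\tau^{k-1}\le 1$ yields a bound in $\Wa$; $\frac{1}{2}|\nabla c|^2$ together with the mass constraint $\int_\Omega(c-c^0)\dx=0$ and Poincar\'e yields an $H^1(\Omega)$-bound; positivity of $\mathbf C$ from \eqref{eqn:assumptionC}, Korn's inequality, the Dirichlet trace, and the $\delta\langle A^{k-1}u,u\rangle_{H^2}$ term control $u$ in $\Hb$. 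The decomposition $\Psi=\Psi_1+\Psi_2$ with $\Psi_1\ge 0$ convex and \eqref{eqn:assumption_psi2} absorbs the chemical contribution, while $f\ge 0$ and the quadratic $L^2$ and $V_0$ increment norms are nonnegative. Given a minimizing sequence $(c_n,u_n,z_n)$, I extract weak limits in $H^1(\Omega)\times\Hb\times\Wa$. Since $p>n$, $\Wa\hookrightarrow\hookrightarrow\C C(\overline\Omega)$ gives $z_n\to z$ uniformly, hence $\mathbf C(z_n)\to\mathbf C(z)$ uniformly; $\Hb\hookrightarrow\hookrightarrow\Ha$ gives $\e(u_n)\to\e(u)$ strongly in $L^2$; and $H^1(\Omega)\hookrightarrow\hookrightarrow L^q(\Omega)$ for $q<2^\star$ gives $c_n\to c$ strongly, so that $W(c_n,\e(u_n),z_n)\to W(c,\e(u),z)$ and $\Psi_2(c_n)\to\Psi_2(c)$ in $L^1$ by \eqref{eqn:assumption_psi2}. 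All remaining contributions are convex and thus weakly lower semicontinuous, and the constraints (box for $z$, mass for $c$, Dirichlet trace for $u$) are weakly closed. This produces a minimizer $(c_\tau^k,u_\tau^k,z_\tau^k)$.

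\emph{Euler--Lagrange equations.} For $u$, two-sided variations in arbitrary $\zeta\in H_{\Gamma_\mathrm{D}}^2(\Omega;\R^n)$ are admissible because the Dirichlet condition is linear; differentiation of the quadratic increment term gives $\int_\Omega \partial_t\widehat v_\tau\cdot\zeta\dx$ with $\partial_t\widehat v_\tau(t)=(u_\tau^k-2u_\tau^{k-1}+u_\tau^{k-2})/\tau^2$, and one recovers \eqref{eqn:discrPde3}. For $z$, admissible perturbations $z_\tau^k+s\zeta$ with $s\ge 0$ and $\zeta\in\Wa$ satisfying $0\le\zeta+z_\tau^k\le z_\tau^{k-1}$ a.e.\ yield by convexity of the box the one-sided inequality $(d/ds)|_{s=0^+}\C F_\tau^k\ge 0$, which is precisely \eqref{eqn:discrPde4}. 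For $c$, mass-preserving two-sided variations by $\zeta\in H^1(\Omega)$ with $\int_\Omega\zeta\dx=0$ give the stationarity identity
\begin{align*}
0=\int_\Omega\bl\nabla c_\tau^k\cdot\nabla\zeta+W_{,c}(c_\tau^k,\e(u_\tau^k),z_\tau^k)\zeta+\Psi'(c_\tau^k)\zeta+\delta\tfrac{c_\tau^k-c_\tau^{k-1}}{\tau}\zeta\br\dx+\tfrac{1}{\tau^2}\bigl\langle c_\tau^k-c_\tau^{k-1},\zeta\bigr\rangle_{V_0}.
\end{align*}
By Lax--Milgram and \eqref{eqn:assumption_m}, $A^{k-1}:U_0\to V_0$ is an isomorphism, so the $V_0$-pairing is representable as $-\int_\Omega \tilde\mu\,\zeta\dx$ for some $\tilde\mu\in H^1(\Omega)$. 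I then set $\mu_\tau^k:=\tilde\mu+\lambda$ with $\lambda\in\R$ chosen so that \eqref{eqn:discrPde2} holds for \emph{every} $\zeta\in H^1(\Omega)$, not just the mean-zero ones. The defining relation of $A^{k-1,-1}$ applied to the identity $A^{k-1}(-A^{k-1,-1}((c_\tau^k-c_\tau^{k-1})/\tau))=-(c_\tau^k-c_\tau^{k-1})/\tau$ in $V_0$ then yields the discrete mass-balance \eqref{eqn:discrPde1}.

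\emph{Main obstacle.} The only nonstandard point is the correct handling of the $V_0$-norm term: one must argue that the mean-zero Euler--Lagrange condition for $c$ admits a scalar-potential representation by $\mu_\tau^k\in H^1(\Omega)$, and that this very potential simultaneously encodes the Cahn--Hilliard flux, so that \eqref{eqn:discrPde1} and \eqref{eqn:discrPde2} emerge in tandem from a single stationarity identity. The argument works because $m\ge C_1>0$ makes $A^{k-1}$ a uniform isomorphism (cf.~\cite{Gar00}). Once this identification is in place, the stated time-regularity of the piecewise constant and linear interpolants follows directly from the construction, and the remaining bounds in Lemma~\ref{lemma:time_discrete} will be recovered from the energy estimates in the subsequent steps.
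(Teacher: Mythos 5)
Your proposal is correct and follows essentially the same route the paper takes (which it only sketches by citing the recursive minimization and the references \cite{WIAS1520, WIASxxxx, HK13_1, Gar00}): direct method for each $\C F_\tau^k$ on the constrained affine set, two-sided variations for $u$ and mean-zero $c$, one-sided variations for $z$ within the box $0\le z\le z_\tau^{k-1}$, and recovery of $\mu_\tau^k$ from the $V_0$-term via the isomorphism $A^{k-1}$, which yields \eqref{eqn:discrPde1} and \eqref{eqn:discrPde2} simultaneously. The only blemish is a $\tau$-power bookkeeping mismatch in your displayed stationarity identity for $c$ (the prefactor of the $V_0$- and $\delta L^2$-increments must come out as $\langle\partial_t\widehat c_\tau,\cdot\rangle$ to match \eqref{eqn:discrPde2}), but this traces back to the paper's own inconsistent scaling of the $c$-increment terms in $\C F_\tau^k$ relative to the $z$- and $u$-increments and does not affect the substance of the argument.
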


	\begin{lemma}[\textbf{A priori estimates}]
	\label{lemma:apriori_tau}
	There exists a constant $C>0$ independent of $\delta$ such that
	\begin{itemize}
	  \item[(i)]
		$\|\nabla c_\tau\|_{L^\infty(0,T;L^2(\Omega;\R^n))} <C, \quad \|\partial_t \widehat c_\tau\|_{L^2(0,T;L^2(\Omega))}<C,$
	  \item[(ii)]
                $\| u_\tau\|_{L^\infty(0,T;H^2(\Omega;\R^n))}<C$, \quad
		$\|v_\tau\|_{L^\infty(0,T;L^2(\Omega;\R^n))}<C,$ \\[2mm]
                $	\|\widehat u_\tau\|_{L^\infty(0,T;H^2(\Omega;\R^n))\cap W^{1,\infty}(0,T;L^2(\Omega;\R^n))}<C$,\\[2mm]
		$		\|\widehat v_\tau\|_{L^{\infty}(0,T;L^2(\Omega;\R^n))\cap H^1(0,T;(H_{\Gamma_\mathrm{D}}^2(\Omega;\R^n))^*)}<C$,
	  \item[(iii)]
		$\|\nabla z_\tau\|_{L^\infty(0,T;L^p(\Omega;\R^n))}<C,\quad \|\partial_t \widehat z_\tau\|_{L^2(0,T;L^2(\Omega))}<C,$
	  \item[(iv)]
		$\|\nabla\mu_\tau\|_{L^2(0,T;L^2(\Omega;\R^n))}<C, \quad \|
		  m(c^-_\tau, z^-_\tau)^{1/2} \nabla\mu_\tau\|_{L^2(0,T;L^2(\Omega;\R^n))}<C.$
	\end{itemize}
	\end{lemma}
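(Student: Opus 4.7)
The plan is to derive a discrete energy inequality by exploiting the minimization defining $(c_\tau^k, u_\tau^k, z_\tau^k)$, sum it over $k$, and then close the estimates via coercivity and a discrete Gronwall argument. Concretely, I compare the minimum $\C F_\tau^k(c_\tau^k, u_\tau^k, z_\tau^k)$ with its value at the admissible competitor $(c_\tau^{k-1},\, 2u_\tau^{k-1} - u_\tau^{k-2} + (b_\tau^k - 2b_\tau^{k-1} + b_\tau^{k-2}),\, z_\tau^{k-1})$. This triple respects all constraints: $c_\tau^{k-1}$ preserves the mass condition by induction, the shifted $u$-term matches $b_\tau^k$ on $\Gamma_\mathrm{D}$, and $z_\tau^{k-1}$ clearly lies in $[0, z_\tau^{k-1}]$. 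At this competitor the discrete dissipation penalties on $c$ and $z$ vanish entirely and the discrete inertial penalty reduces to $\tfrac{\tau^2}{2}\|(b_\tau^k - 2b_\tau^{k-1} + b_\tau^{k-2})/\tau^2\|_{L^2}^2$, while on the left the minimizer reproduces the full discrete dissipation. Taylor expansion of $W$, $f$, $\Psi$ at $(c_\tau^{k-1}, \varepsilon(u_\tau^{k-1}), z_\tau^{k-1})$, together with the polarization identity applied to the velocity contribution, yields a telescoping estimate of the form
\begin{align*}
  \C E_\tau^k + \C K_\tau^k + \tau D_\tau^k \;\le\; \C E_\tau^{k-1} + \C K_\tau^{k-1} + \tau R_\tau^k,
\end{align*}
where $D_\tau^k$ collects all discrete dissipation terms and $R_\tau^k$ the work done by $\partial_t b$, $\partial_{tt} b$ and $l$, mirroring \eqref{eqn:weak6_delta}.

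Summing from $k=1$ to $m \le M_\tau$ and applying Young's inequality to $R_\tau^k$ gives bounds on $\C E_\tau^m + \C K_\tau^m$ plus the accumulated dissipation, uniformly in $\tau$. Coercivity of $\C E_\tau$ follows from $\mathbf C(z)e:e \ge \eta |e|^2$ combined with Korn's inequality for mixed boundary conditions, valid on Lipschitz domains with $\mathcal H^{n-1}(\Gamma_\mathrm{D}) > 0$; the chemical energy is bounded below using $\Psi_1 \ge 0$ and the quadratic bound $|\Psi_2'(c)| \le C(1+|c|)$, with the remaining $\|c\|_{L^2}$-contribution absorbed via Poincar\'e's inequality and mass conservation $\int_\Omega c\,\mathrm dx = \int_\Omega c^0\,\mathrm dx$. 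A discrete Gronwall argument closes the bootstrap and delivers the $L^\infty$-in-time bounds in (i), (ii), (iii), together with $\|\partial_t \widehat z_\tau\|_{L^2(0,T;L^2)}$ and the $\delta$-weighted bound on $\|\partial_t \widehat c_\tau\|_{L^2(0,T;L^2)}$.

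The remaining estimates are then extracted from the Euler--Lagrange equations. Testing \eqref{eqn:discrPde1} with $\zeta = \mu_\tau$ yields $\int_{\Omega_T} m(c_\tau^-, z_\tau^-)|\nabla \mu_\tau|^2 \,\mathrm dx\,\mathrm dt = -\int_{\Omega_T} \partial_t \widehat c_\tau\, \mu_\tau \,\mathrm dx\,\mathrm dt$; inserting \eqref{eqn:discrPde2} with $\zeta = \partial_t \widehat c_\tau$ re-expresses the right-hand side as a discrete time-variation of the free energy, already controlled from Step~2, and the assumption $m \ge C_1$ converts this into the $L^2$-bound on $\nabla \mu_\tau$. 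The $H^1(0, T; (H^2_{\Gamma_\mathrm{D}})^*)$-bound on $\widehat v_\tau$ in (ii) is then read off from \eqref{eqn:discrPde3} by bounding each term in the dual norm using the already established bounds on $u_\tau$, $c_\tau$, $z_\tau$ and $l$. The main difficulty lies in the execution of the energy estimate itself: the cross terms produced by the inertial quadratic $\|(u - 2u_\tau^{k-1} + u_\tau^{k-2})/\tau^2\|_{L^2}^2$ when the shifted $u$-competitor is inserted must be controlled by the assumed regularity $b \in H^1(0, T; H^2) \cap W^{2,1}(0, T; L^2)$, and one must also verify that the nonconvex contributions from $W_{,c}$ and $\Psi_2'$ can be absorbed into Gronwall; once these are handled, the rest is an intricate but routine bootstrap.
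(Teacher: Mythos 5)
There is a genuine gap in your Step 1, and it sits exactly at the hyperbolic part of the system. Comparing $\C F_\tau^k$ at the minimizer with its value at the extrapolated competitor $\tilde u=2u_\tau^{k-1}-u_\tau^{k-2}+(b_\tau^k-2b_\tau^{k-1}+b_\tau^{k-2})$ produces on the left only the nonnegative increment $\frac{\tau^2}{2}\big\|\frac{u_\tau^k-2u_\tau^{k-1}+u_\tau^{k-2}}{\tau^2}\big\|_{L^2}^2=\frac12\|v_\tau^k-v_\tau^{k-1}\|_{L^2}^2$; the kinetic energies $\frac12\|v_\tau^k\|^2$ and $\frac12\|v_\tau^{k-1}\|^2$ never appear, and no polarization identity recovers the difference $\C K_\tau^k-\C K_\tau^{k-1}$ from $\frac12\|v_\tau^k-v_\tau^{k-1}\|^2$ alone. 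Worse, the elastic energy must be evaluated at the competitor: since $W$ is quadratic in $e$,
\begin{align*}
\int_\Omega W(c_\tau^{k-1},\e(\tilde u),z_\tau^{k-1})\dx
={}&\int_\Omega W(c_\tau^{k-1},\e(u_\tau^{k-1}),z_\tau^{k-1})\dx
+\tau\int_\Omega W_{,e}(c_\tau^{k-1},\e(u_\tau^{k-1}),z_\tau^{k-1}):\e(v_\tau^{k-1})\dx\\
&+\frac{\tau^2}{2}\int_\Omega \mathbf C(z_\tau^{k-1})\e(v_\tau^{k-1}):\e(v_\tau^{k-1})\dx+\dots,
\end{align*}
and the last two terms are only $O(1)$ per time step (you control $\tau\,\e(v_\tau^{k-1})=\e(u_\tau^{k-1})-\e(u_\tau^{k-2})$ in $L^2$, but there is no bound on $\e(v_\tau)$ in any space), so their sum over $k=1,\dots,M_\tau$ is $O(1/\tau)$ and cannot be absorbed by Gronwall. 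Replacing the competitor by $u_\tau^{k-1}+(b_\tau^k-b_\tau^{k-1})$ does not help either: one then gets $\frac12\|v_\tau^k-v_\tau^{k-1}\|^2\le\frac12\|v_\tau^{k-1}\|^2+\dots$, which only yields a geometrically growing bound on $\|v_\tau^k\|^2$. The energy-comparison trick is sound for the purely dissipative variables $c$ and $z$ (where the competitor is the previous iterate and the penalties vanish), but it fails for the inertial variable.

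The paper circumvents this by first deriving the Euler--Lagrange system of Lemma \ref{lemma:time_discrete} and then testing: \eqref{eqn:discrPde1} with $\tau\mu_\tau$, \eqref{eqn:discrPde2} with $c_\tau-c_\tau^-$, and crucially \eqref{eqn:discrPde3} with $u_\tau-u_\tau^--(b_\tau-b_\tau^-)$. The inertial pairing then reads $\int_\Omega\frac{v_\tau^k-v_\tau^{k-1}}{\tau}\cdot\tau v_\tau^k\dx\geq\frac12\|v_\tau^k\|_{L^2}^2-\frac12\|v_\tau^{k-1}\|_{L^2}^2$, which is the telescoping kinetic term you are missing, and convexity of $W$ in $e$ compares consecutive \emph{minimizers} directly via $W_{,e}(c_\tau^k,\e(u_\tau^k),z_\tau^k):\e(u_\tau^k-u_\tau^{k-1})\geq W(c_\tau^k,\e(u_\tau^k),z_\tau^k)-W(c_\tau^k,\e(u_\tau^{k-1}),z_\tau^k)$, together with $z_\tau\leq z_\tau^-$ and $\mathbf C'\geq0$, so no $\e(v_\tau)$-terms arise. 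Item (iii) is then obtained separately by testing the variational inequality \eqref{eqn:discrPde4} with $z_\tau^--z_\tau$ and a convexity estimate for the $p$-Laplacian energy. Your Step 3 (the $\nabla\mu_\tau$ bound and the comparison argument for $\widehat v_\tau$ in the dual norm) matches the paper, but it is downstream of an energy estimate that, as you set it up, does not close.
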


\begin{proof} We split the proof into two steps.
        We first prove the a priori estimates (i), (ii) and (iv) and then we deduce estimate (iii). \\
	\textit{First a priori estimates.}
	Testing \eqref{eqn:discrPde1} with $\tau\mu_\tau$,
	testing\eqref{eqn:discrPde2} with $c_\tau-c_\tau^-$,
	testing \eqref{eqn:discrPde3} with $u_\tau-u_\tau^--(b_\tau-b_\tau^-)$,
	and adding everything, yield
	\begin{align*}
		T_1(t)+T_2(t)+T_3(t)+T_4(t)+T_5(t)\leq 0
	\end{align*}
	with
	{\small\begin{align*}
		&T_1(t):=\int_\Omega \nabla c_\tau(t)\cdot\nabla(c_\tau(t)-c_\tau^-(t))\dx
			+\int_\Omega \delta\langle\nabla(\nabla u_\tau(t)),\nabla(\nabla(u_\tau(t)-u_\tau^-(t)))\rangle\dx\\
			&\qquad\qquad+\int_\Omega \partial_t \widehat v_\tau(t)\cdot(u_\tau(t)-u_\tau^-(t))\dx,\\
		&T_2(t):=\tau \int_\Omega m(c_\tau(t),z_\tau(t))|\nabla\mu_\tau(t)|^2\dx+\tau\int_\Omega \delta|\partial_t \widehat c_\tau(t)|^2\dx,\\
		&T_3(t):=\int_\Omega W_{,c}(c_\tau(t),\e(u_\tau(t)),z_\tau(t))(c_\tau(t)-c_\tau^-(t))\dx\\
			&\qquad\qquad+\int_\Omega W_{,e}(c_\tau(t),\e(u_\tau(t)),z_\tau(t)):\e(u_\tau(t)-u_\tau^-(t))\dx\\
		&T_4(t):=\tau\int_\Omega \Psi'(c_\tau(t))\partial_t\widehat c_\tau(t)\dx
			-\tau\int_\Omega l_\tau(t)\cdot\partial_t\widehat u_\tau(t)\dx\\
		&T_5(t):=-\tau\int_\Omega\Big(\partial_t\widehat v_\tau(t)\cdot\partial_t \widehat b_\tau(t)
			+W_{,e}(c_\tau(t),\e(u_\tau(t)),z_\tau(t)):\e(\partial_t \widehat b_\tau(t))\Big)\dx,\\
			&\qquad\qquad-\tau\int_\Omega\Big(\delta\langle\nabla(\nabla u_\tau(t)),\nabla(\nabla(\partial_t \widehat b_\tau(t)))\rangle_{\R^{n\times n\times n}}
			-l_\tau(t)\cdot\partial_t \widehat b_\tau(t)\Big)\dx.
	\end{align*}}These terms are estimated in the following.
	\begin{itemize}
		\item
			Convexity estimates yield
			\begin{align*}
				T_1(t)\geq{}& \frac12\|\nabla c_\tau(t)\|_{L^2(\Omega)}^2-\frac12\|\nabla c_\tau^-(t)\|_{L^2(\Omega)}^2\\
					&+\frac\delta2\|\nabla (\nabla u_\tau(t))\|_{L^2(\Omega;\R^{n\times n\times n})}^2
					-\frac\delta2\|\nabla (\nabla u_\tau^-(t))\|_{L^2(\Omega;\R^{n\times n\times n})}^2\\
					&+\frac12\|v_\tau(t)\|_{L^2(\Omega;\R^n)}^2-\frac12\|v_\tau^-(t)\|_{L^2(\Omega;\R^n)}^2.
			\end{align*}
		\item
			We obtain for small $\eta>0$:
			\begin{align*}
				T_2(t)\geq{}& \eta\int_{t_\tau^-}^{t_\tau}\Big(\|\nabla\mu_\tau(s)\|_{L^2(\Omega;\R^n)}^2+ \delta \|\partial_t\widehat
	c_\tau(s)\|_{L^2(\Omega)}^2\Big)\ds.
			\end{align*}
		\item
			By the convexity argument and by $z_\tau\leq z_\tau^-$, we gain
			\begin{align*}
				&W_{,e}(c_\tau(t),\e(u_\tau(t)),z_\tau(t)):\e(u_\tau(t)-u_\tau^-(t))\\
				&\qquad\geq W(c_\tau(t),\e(u_\tau(t)),z_\tau(t))-W(c_\tau(t),\e(u_\tau^-(t)),z_\tau(t))\\
				&\qquad\geq W(c_\tau(t),\e(u_\tau(t)),z_\tau(t))-W(c_\tau^-(t),\e(u_\tau^-(t)),z_\tau^-(t))\\
				&\qquad\quad+\int_{t_\tau^-}^{t_\tau}W_{,c}(\widehat c_\tau(s),\e(u_\tau^-(s)),z_\tau(s))\partial_t\widehat c_\tau(s)\ds,
			\end{align*}
			and conclude ($\eta>0$ is chosen as small as necessary)
			\begin{align*}
				&T_3(t)\geq\int_\Omega \big(W(c_\tau(t),\e(u_\tau(t)),z_\tau(t))-W(c_\tau^-(t),\e(u_\tau^-(t)),z_\tau^-(t))\big)\dx\\
					&\qquad\quad+\int_{t_\tau^-}^{t_\tau}\int_\Omega W_{,c}(c_\tau(s),\e(u_\tau(s)),z_\tau(s))\partial_t \widehat c_\tau(s)\dxs\\
					&\qquad\qquad+\int_{t_\tau^-}^{t_\tau}\int_\Omega W_{,c}(\widehat c_\tau(s),\e(u_\tau^-(s)),z_\tau(s))\partial_t\widehat c_\tau(s)\dxs\\
				&\qquad\geq
				\int_\Omega \big(W(c_\tau(t),\e(u_\tau(t)),z_\tau(t))-W(c_\tau^-(t),\e(u_\tau^-(t)),z_\tau^-(t))\big)\dx\\
					&\qquad\quad-C_\eta\int_{t_\tau^-}^{t_\tau}\|W_{,c}(c_\tau(s),\e(u_\tau(s)),z_\tau(s))\|_{L^2(\Omega)}^2\ds\\
					&\qquad\quad-C_\eta\int_{t_\tau^-}^{t_\tau}\|W_{,c}(\widehat c_\tau(s),\e(u_\tau^-(s)),z_\tau(s))\|_{L^2(\Omega)}^2\ds\\
					&\qquad\quad-\eta\int_{t_\tau^-}^{t_\tau}\|\partial_t \widehat c_\tau(s)\|_{L^2(\Omega)}^2\ds\\
				&\qquad\geq\int_\Omega \big(W(c_\tau(t),\e(u_\tau(t)),z_\tau(t))-W(c_\tau^-(t),\e(u_\tau^-(t)),z_\tau^-(t))\big)\dx\\
					&\qquad\quad-\widehat C_\eta\int_{t_\tau^-}^{t_\tau}\big(\|c_\tau(s)\|_{L^2(\Omega)}^2+\|c_\tau^-(s)\|_{L^2(\Omega)}^2+\|\e(u_\tau(s))\|_{L^2(\Omega)}^2\big)\ds\\
					&\qquad\quad-\widehat C_\eta\int_{t_\tau^-}^{t_\tau}\|\e(u_\tau^-(s))\|_{L^2(\Omega)}^2\ds
						-\eta\int_{t_\tau^-}^{t_\tau}\|\partial_t \widehat c_\tau(s)\|_{L^2(\Omega)}^2\ds.
			\end{align*}
		\item
			Convexity of $\Psi_1$ combined with growth condition \eqref{eqn:assumption_psi2} and Young's inequality show
			\begin{align*}
				T_4(t)\geq{}&\int_\Omega\Psi_1(c_\tau(t))\dx-\int_\Omega\Psi_1(c_\tau^-(t))\dx
\end{align*}\begin{align*}
					&-\eta\int_{t_\tau^-}^{t_\tau}\Big(\|\partial_t\widehat c_\tau(s)\|_{L^2(\Omega)}^2+\|l_\tau(s)\|_{L^2(\Omega;\R^n)}^2\Big)\ds\\
				&-C_\eta\int_{t_\tau^-}^{t_\tau}\Big(\|\Psi_{2}'(c_\tau(s))\|_{L^2(\Omega)}^2
					+\|v_\tau(s)\|_{L^2(\Omega;\R^n)}^2 \Big)\ds\\
				\geq{}&\int_\Omega\Psi_1(c_\tau(t))\dx-\int_\Omega\Psi_1(c_\tau^-(t))\dx\\
					&-\eta\int_{t_\tau^-}^{t_\tau}\Big(\|\partial_t\widehat c_\tau(s)\|_{L^2(\Omega)}^2+\|l_\tau(s)\|_{L^2(\Omega;\R^n)}^2\Big)\ds\\
				&-C_\eta\int_{t_\tau^-}^{t_\tau}\Big(\|c_\tau(s)\|_{L^2(\Omega)}^2+\|v_\tau(s)\|_{L^2(\Omega;\R^n)}^2
                                        \Big)\ds.
			\end{align*}
		\item
			By using the discrete integration by parts formula, i.e.,
			\begin{align}
				\int_{t_\tau^-}^{t_\tau}\int_{\Omega}\partial_t \widehat v_\tau\cdot\partial_t \widehat b_\tau\dxs
					={}&\int_\Omega v_\tau(t)\cdot\partial_t \widehat b_\tau(t)\dx
					-\int_\Omega v_\tau^-(t)\cdot\partial_t\widehat b_\tau(t-\tau)\dx\notag\\
				&-\int_{t_\tau^-}^{t_\tau}\int_\Omega v_\tau^-(s)\cdot\frac{\partial_t\widehat b_\tau(s)-\partial_t\widehat b_\tau(s-\tau)}{\tau}\dxs,
			\label{eqn:discrIntegrByParts}
			\end{align}
			we obtain
			\begin{align*}
				T_5(t)\geq{}&-\int_\Omega v_\tau(t)\cdot\partial_t \widehat b_\tau(t)\dx
					+\int_\Omega v_\tau^-(t)\cdot\partial_t\widehat b_\tau(t-\tau)\dx\\
					&-\int_{t_\tau^-}^{t_\tau}\Big(\eta\|v_\tau^-(s)\|_{L^2(\Omega)}^2
					+C_\eta\Big\|\frac{\partial_t\widehat b_\tau(s)-\partial_t\widehat b_\tau(s-\tau)}{\tau}\Big\|_{L^2(\Omega)}^2\Big)\ds\\
					&-\int_{t_\tau^-}^{t_\tau}\Big(\eta\|c_\tau(s)\|_{L^2(\Omega)}^2+\eta\|\e(u_\tau(s))\|_{L^2(\Omega;\R^{n\times n})}^2\Big)\ds\\
					&-\int_{t_\tau^-}^{t_\tau}C_\eta\|\e(\partial_t \widehat b_\tau(s))\|_{L^2(\Omega;\R^{n\times n})}^2\ds\\
					&-\int_{t_\tau^-}^{t_\tau}\Big(\eta\|\nabla(\nabla u_\tau(s))\|_{L^2(\Omega;\R^{n\times n\times n})}^2
					+C_\eta\|\nabla(\nabla \partial_t \widehat b_\tau(s))\|_{L^2(\Omega;\R^{n\times n\times n})}^2\Big)\ds\\
					&-\int_{t_\tau^-}^{t_\tau}\Big(\eta\|l_\tau(s)\|_{L^2(\Omega;\R^n)}^2+C_\eta\|\partial_t\widehat
	b_\tau(s)\|_{L^2(\Omega;\R^n)}^2\Big)\ds .
			\end{align*}
	\end{itemize}
	Summing over the discrete time points $t_\tau=0,\tau,\ldots, k\tau$ for an arbitrary but fixed chosen $k\in\N$, we can apply Gronwall's inequality
	and obtain the following boundedness properties:
	\begin{align}
		\|\nabla c_\tau\|_{L^\infty(0,T;L^2(\Omega;\R^n))}&<C,  \\
		\|\partial_t \widehat c_\tau\|_{L^2(0,T;L^2(\Omega))}&<C, \\
		\|\nabla(\nabla u_\tau)\|_{L^\infty(0,T;L^2(\Omega;\R^{n\times
	n\times n}))}&<C, \label{eqn:aux1}\\
		\|\e(u_\tau)\|_{L^\infty(0,T;L^2(\Omega;\R^{n\times n}))}&<C, \label{eqn:aux2}\\
		\|v_\tau\|_{L^\infty(0,T;L^2(\Omega;\R^n))}&<C, \label{eqn:aux3}\\
		\|\nabla\mu_\tau\|_{L^2(0,T;L^2(\Omega;\R^n))}&<C,
	\end{align}
	where $C>0$ is independent of $\tau$. Combining estimates \eqref{eqn:aux1}-\eqref{eqn:aux3}  with
	Korn's inequality, we obtain
	\begin{align*}
				\|u_\tau\|_{L^\infty(0,T;H^2(\Omega;\R^n))}&<C.
			\end{align*}
			Consequently, by noticing $v_\tau=\partial_t\widehat u_\tau$,
			\begin{align*}
				\|\widehat u_\tau\|_{L^\infty(0,T;H^2(\Omega;\R^n))\cap W^{1,\infty}(0,T;L^2(\Omega;\R^n))}&<C.
			\end{align*}
			A comparison argument in \eqref{eqn:discrPde3} also gives
			\begin{align*}
				\|\widehat v_\tau\|_{L^{\infty}(0,T;L^2(\Omega;\R^n))\cap H^1(0,T;(H_{\Gamma_\mathrm{D}}^2(\Omega;\R^n))^*)}&<C.
			\end{align*}
	
	\textit{Second a priori estimates.}
	Testing \eqref{eqn:discrPde4} with $z_\tau^--z_\tau$, yields
	\begin{align*}
		&\int_\Omega |\nabla z_\tau(t)|^{p-2}\nabla z_\tau(t)\cdot\nabla (z_\tau(t)-z_\tau^-(t))\dx
			+\frac 12\tau\|\partial_t\widehat z_\tau(t)\|_{L^2(\Omega)}^2\\
		&\qquad\qquad\leq - \tau\int_\Omega\bl W_{,z}(c(\tau(t)),\e(u_\tau(t)),z_\tau(t))\partial_t\widehat z_\tau(t)+f'(z_\tau(t))\partial_t\widehat z_\tau(t)\br\dx.
	\end{align*}
	Now we apply a convexity estimate and get
	\begin{align*}
		&\frac 1p\|\nabla z_\tau(t)\|_{L^p(\Omega;\R^n)}^p-\frac 1p\|\nabla z_\tau^-(t)\|_{L^p(\Omega;\R^n)}^p
			+\frac 12\tau\|\partial_t\widehat z_\tau(t)\|_{L^2(\Omega)}^2\\
		&\qquad\qquad\leq\tau\eta\|\partial_t\widehat z_\tau(t)\|_{L^2(\Omega)}^2
			+\tau C_\eta\big(1+\|c_\tau(t)\|_{L^4(\Omega)}^4+\|\e(u_\tau(t))\|_{L^4(\Omega;\R^{n\times n})}^4\big).
	\end{align*}
	We end up with
	\begin{align*}
		\|\nabla z_\tau\|_{L^\infty(0,T;L^p(\Omega;\R^n))}&<C,\\
		\|\partial_t \widehat z_\tau\|_{L^2(0,T;L^2(\Omega))}&<C,
	\end{align*}
	where $C>0$ is independent of $\tau$.
\end{proof}

	By applying Poincar{\'e}'s inequality, standard weak and weakly-star compactness results to the above a priori estimates,
	we obtain the following convergence properties.
	\begin{lemma}[Convergence properties]
		There exist functions
		\begin{align*}
			&c\in L^\infty(0,T;H^1(\Omega))\cap H^1(0,T;L^2(\Omega)),\\
			&u\in L^\infty(0,T;H^2(\Omega;\R^n))\cap W^{1,\infty}(0,T;L^2(\Omega;\R^n))\cap W^{2,\infty}(0,T;(H_{\Gamma_\mathrm{D}}^2(\Omega;\R^n))^*),\\
			&z\in L^\infty(0,T;\Wa)\cap H^1(0,T;L^2(\Omega)),\\
			&\mu\in L^2(0,T;H^1(\Omega))
		\end{align*}
		and subsequences (omitting the subscript) such that for all $r\geq 1$ and $s<2^*$:
		\begin{align}
		  \label{eqn:conv0}
			c_{\tau},c_{\tau}^-&\to c&&\text{ weakly-star in }L^\infty(0,T;H^1(\Omega)),\\
		  \label{eqn:conv1}
			&&&\text{ strongly in }L^r(0,T;L^s(\Omega)),\text{ a.e.~in }\Omega_T,\\
		  \label{eqn:conv2}
			\widehat c_{\tau}&\to c&&\text{ weakly-star in }
\\ && & \hspace{0.3cm}L^\infty(0,T;H^1(\Omega))\cap H^1(0,T;L^2(\Omega)),\\
		  \label{eqn:conv3_0}
			u_{\tau},u_{\tau}^-&\to u&&\text{ weakly-star in }
L^\infty(0,T;H^2(\Omega;\R^n)),\\
		  \label{eqn:conv3}
			&&&\text{ strongly in }L^r(0,T;H^1(\Omega;\R^n)),\text{ a.e.~in }\Omega_T,\\
		  \label{eqn:conv4}
			\widehat u_{\tau}&\to u&&\text{ weakly-star in }\notag
\\&& & \hspace{0.3cm}
L^\infty(0,T;H^2(\Omega;\R^n))\cap W^{1,\infty}(0,T;L^2(\Omega;\R^n)),\\
		  \label{eqn:conv5}
			v_{\tau},v_{\tau}^-&\to \partial_t u&&\text{ weakly-star in }L^{\infty}(0,T;L^2(\Omega;\R^n)),
\end{align}\begin{align}		  \label{eqn:conv6}
			\widehat v_{\tau}&\to \partial_t u&&\text{ weakly-star
                          in }
\\&& & \hspace{0.3cm}
L^{\infty}(0,T;L^2(\Omega;\R^n))\cap H^1(0,T;(H_{\Gamma_\mathrm{D}}^2(\Omega;\R^n))^*),\\
			z_{\tau},z_{\tau}^-&\to z&&\text{ weakly-star in }L^\infty(0,T;\Wa),\notag\\
		  \label{eqn:conv7}
			&&&\text{ strongly in }L^r(0,T;L^r(\Omega;\R^n)),\text{ a.e.~in }\Omega_T,\\
		  \label{eqn:conv8}
			\widehat z_{\tau}&\to z&&\text{ weakly-star in }
\\&& & \hspace{0.3cm}
L^\infty(0,T;\Wa)\cap H^1(0,T;L^2(\Omega)),\\
		  \label{eqn:conv9}
			\mu_{\tau}&\to \mu&&\text{ weakly in } L^2(0,T;H^1(\Omega)),\\
		  \label{eqn:conv10}
			m(c_\tau^-, z_\tau^-)^{\frac{1}{2}} \nabla \mu_{\tau} &\to  m(c, z)^{\frac{1}{2}} \nabla \mu&&\text{ weakly in } L^2(0,T;L^2(\Omega; \R^n))
		\end{align}
		as $\tau\searrow 0$.
	\end{lemma}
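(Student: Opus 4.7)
The plan is to extract the weak/weak-star limits directly from the a priori estimates of Lemma \ref{lemma:apriori_tau} via the Banach-Alaoglu theorem, identify the limits of the piecewise constant and linear interpolants with each other, upgrade selected weak convergences to strong ones via an Aubin-Lions compactness argument, and finally treat the nonlinear mobility term separately.

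First, I would apply Banach-Alaoglu in the dual of each reflexive or separable space appearing in Lemma \ref{lemma:apriori_tau}. This yields (along suitable subsequences) the weak-star limits $c$, $u$, $z$, $\mu$ together with the convergences \eqref{eqn:conv0}, \eqref{eqn:conv3_0}, \eqref{eqn:conv5}, the first line of \eqref{eqn:conv7}, and \eqref{eqn:conv9}. The same estimates applied to $\widehat c_\tau$, $\widehat u_\tau$, $\widehat v_\tau$, $\widehat z_\tau$ give weak-star limits in the spaces named in \eqref{eqn:conv2}, \eqref{eqn:conv4}, \eqref{eqn:conv6}, \eqref{eqn:conv8}. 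Next, I would identify the limits of the piecewise constant and linear interpolants. For any $w \in \{c,u,z\}$ one has the pointwise identity $\|w_\tau(t)-\widehat w_\tau(t)\|_X \leq \tau\,\|\partial_t \widehat w_\tau(t)\|_X$ on each subinterval, so the difference tends to $0$ in $L^2(0,T;X)$ for a space $X$ in which the discrete time derivative is bounded; the same applies to $w_\tau^-$ versus $w_\tau$. Hence all three interpolants share the same weak-star limit, and in particular $v_\tau,\widehat v_\tau \to \partial_t u$ follows since $v_\tau = \partial_t \widehat u_\tau$ and $\widehat u_\tau \rightharpoonup u$ in $H^1(0,T;L^2)$.

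The strong convergences in \eqref{eqn:conv1}, \eqref{eqn:conv3}, \eqref{eqn:conv7} require an Aubin-Lions argument. For $c$: from $\widehat c_\tau$ bounded in $L^\infty(0,T;H^1(\Omega)) \cap H^1(0,T;L^2(\Omega))$ and the compact embedding $H^1(\Omega) \hookrightarrow\hookrightarrow L^s(\Omega)$ for $s<2^\star$, one obtains $\widehat c_\tau \to c$ strongly in $C([0,T];L^s(\Omega))$, and then in $L^r(0,T;L^s(\Omega))$ for every $r\geq 1$; after passing to a further subsequence, convergence a.e.\ in $\Omega_T$ holds, and by the interpolation-difference estimate the same convergence transfers to $c_\tau$ and $c_\tau^-$. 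For $u$: from $\widehat u_\tau$ bounded in $L^\infty(0,T;H^2) \cap W^{1,\infty}(0,T;L^2)$ and $H^2 \hookrightarrow\hookrightarrow H^1$, Aubin-Lions gives strong convergence in $C([0,T];H^1)$, which transfers analogously to $u_\tau,u_\tau^-$. For $z$: $\widehat z_\tau$ is bounded in $L^\infty(0,T;W^{1,p}) \cap H^1(0,T;L^2)$ and since $p>n$ we have $W^{1,p}(\Omega) \hookrightarrow\hookrightarrow L^r(\Omega)$ for every $r\geq 1$, giving strong convergence in $L^r(0,T;L^r(\Omega))$ and, up to subsequences, a.e.\ in $\Omega_T$.

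The convergence \eqref{eqn:conv10} is, as expected, the main technical point. By assumption \eqref{eqn:assumption_m} the mobility $m$ is continuous and bounded on $\mathbb R\times[0,1]$; combined with the a.e.\ convergence $c_\tau^-\to c$, $z_\tau^-\to z$ in $\Omega_T$ and dominated convergence, $m(c_\tau^-,z_\tau^-)^{1/2} \to m(c,z)^{1/2}$ strongly in $L^q(\Omega_T)$ for every $q<\infty$. Multiplying the weakly convergent sequence $\nabla\mu_\tau \rightharpoonup \nabla\mu$ in $L^2(0,T;L^2)$ by this strongly convergent uniformly bounded factor gives the desired weak convergence in $L^2(0,T;L^2(\Omega;\mathbb R^n))$. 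The only obstacle I anticipate is the careful handling of the piecewise constant versus linear interpolants when transferring strong convergence; this is resolved by the elementary $O(\tau)$ estimate together with the uniform bound on the discrete time derivative already provided by Lemma \ref{lemma:apriori_tau}.
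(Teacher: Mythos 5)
Your overall architecture coincides with what the paper does (the paper compresses the whole proof into one sentence: Poincar\'e's inequality plus standard weak/weak-star compactness applied to Lemma \ref{lemma:apriori_tau}), and your elaboration of the interpolant-identification via the $O(\tau)$ estimate, the Aubin--Lions upgrades, and the weak-times-strong treatment of \eqref{eqn:conv10} is correct.

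There is, however, one concrete step that fails as literally written. You propose to obtain \eqref{eqn:conv0} and \eqref{eqn:conv9} by ``applying Banach--Alaoglu in the dual of each space appearing in Lemma \ref{lemma:apriori_tau}'', but that lemma controls only $\nabla c_\tau$ in $L^\infty(0,T;L^2)$ and $\nabla\mu_\tau$ in $L^2(0,T;L^2)$, not the functions themselves; weak-star compactness of $c_\tau$ in $L^\infty(0,T;H^1(\Omega))$ and weak compactness of $\mu_\tau$ in $L^2(0,T;H^1(\Omega))$ do not follow without a bound on the spatial means. For $c_\tau$ this is supplied by the mass constraint $\int_\Omega (c_\tau^k-c^0)\dx=0$ built into the minimization together with the Poincar\'e--Wirtinger inequality. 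For $\mu_\tau$ one must additionally test \eqref{eqn:discrPde2} with a constant, use the growth condition \eqref{eqn:assumption_psi} and the already established bounds on $c_\tau$, $\e(u_\tau)$ and $\partial_t\widehat c_\tau$ to bound $\int_\Omega\mu_\tau\dx$ in $L^2(0,T)$, and only then apply Poincar\'e. This is exactly the role of the phrase ``by applying Poincar\'e's inequality'' in the paper's one-line proof; your write-up omits it, and without it \eqref{eqn:conv9} (and hence the identification of the limit in \eqref{eqn:conv10}) is not justified. The remainder of your argument is sound.
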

		
	Strong convergence of a subsequence of $\{\nabla z_{\tau}\}$ in $L^p(\Omega_T;\R^n)$ can
	be shown as in \cite{HK13_1} by a tricky approximation argument.
	\begin{lemma}[cf.~\cite{HK13_1}]
	\label{lemma:strongConvZ}
		There exists a sequence $\{ \tau_k \}_{ k \in \N}$ such that
		$z_{\tau_k}\to z$ in $L^p(0,T;W^{1,p}(\Omega))$ as $\tau_k \searrow 0$.
	\end{lemma}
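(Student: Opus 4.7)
The plan is to upgrade the weak convergence of $\nabla z_\tau$ in $L^p(\Omega_T;\R^n)$ (implicit in \eqref{eqn:conv8}) to strong convergence via a Minty-type monotonicity argument applied to the $p$-Laplacian, and then combine this with the already established strong convergence of $z_\tau$ in $L^p$ from \eqref{eqn:conv7}. The main difficulty is that the discrete variational inequality \eqref{eqn:discrPde4} only accepts test functions $\zeta\in\Wa$ satisfying $0\le\zeta+z_\tau\le z_\tau^-$, so the natural choice $\zeta=z-z_\tau$ is in general inadmissible and must be replaced by a truncation.

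First, I would set $\zeta_\tau:=\min(z,z_\tau^-)-z_\tau\in\Wa$, which satisfies $\zeta_\tau+z_\tau=\min(z,z_\tau^-)\in[0,z_\tau^-]$ because $z\ge 0$ a.e. Plugging $\zeta_\tau$ into \eqref{eqn:discrPde4}, integrating in time and rearranging gives
\begin{align*}
\int_{\Omega_T}|\nabla z_\tau|^{p-2}\nabla z_\tau\cdot\nabla z_\tau\dxt
\le\int_{\Omega_T}|\nabla z_\tau|^{p-2}\nabla z_\tau\cdot\nabla\min(z,z_\tau^-)\dxt+R_\tau,
\end{align*}
with $R_\tau:=-\int_{\Omega_T}(W_{,z}(c_\tau,\e(u_\tau),z_\tau)+f'(z_\tau)+\partial_t\widehat z_\tau)\,\zeta_\tau\dxt\to 0$, since $\zeta_\tau\to 0$ strongly in $L^2(\Omega_T)$ (both $z_\tau$ and $z_\tau^-$ converge strongly to $z$ by \eqref{eqn:conv7} and the Lipschitz property of $\min$), while the factors in parentheses are bounded in $L^2(\Omega_T)$ by Lemma \ref{lemma:apriori_tau} together with the continuity and growth of $W_{,z}$ and $f'$.

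The core of the \emph{tricky} step is to prove
\begin{align*}
\limsup_{\tau\searrow 0}\int_{\Omega_T}|\nabla z_\tau|^{p-2}\nabla z_\tau\cdot\nabla\min(z,z_\tau^-)\dxt
\le \int_{\Omega_T}\chi\cdot\nabla z\dxt,
\end{align*}
where $\chi\in L^{p/(p-1)}(\Omega_T;\R^n)$ denotes a subsequential weak limit of $|\nabla z_\tau|^{p-2}\nabla z_\tau$. Writing $\min(z,z_\tau^-)=z-(z-z_\tau^-)_+$, the $\nabla z$ part passes to the limit by weak convergence; the correction term is supported in $\{z>z_\tau^-\}$, a set of vanishing measure, because $\widehat z_\tau$ and hence $z$ are non-increasing in $t$ while $z_\tau^-$ corresponds to an earlier time step, so $\1_{\{z>z_\tau^-\}}\to 0$ a.e.\ (and in measure) on $\Omega_T$. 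Equi-integrability of $|\nabla z_\tau|^{p-1}(|\nabla z|+|\nabla z_\tau^-|)$ in $L^1(\Omega_T)$, inherited from the uniform $L^\infty(0,T;L^p)$ bound on $\nabla z_\tau$ and $\nabla z_\tau^-$, then forces the correction to vanish. This measure-theoretic analysis on the shrinking obstacle set is where the genuine care is required.

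Once the above $\limsup$ inequality is available, I would close the argument by Minty's trick: monotonicity of $a\mapsto|a|^{p-2}a$ yields, for every $\phi\in L^p(0,T;\Wa)$,
\begin{align*}
0\le\int_{\Omega_T}\bl|\nabla z_\tau|^{p-2}\nabla z_\tau-|\nabla\phi|^{p-2}\nabla\phi\br\cdot(\nabla z_\tau-\nabla\phi)\dxt,
\end{align*}
and passing to the $\limsup$ gives $\int_{\Omega_T}(\chi-|\nabla\phi|^{p-2}\nabla\phi)\cdot(\nabla z-\nabla\phi)\dxt\ge 0$; substituting $\phi=z\pm\epsilon\psi$ and sending $\epsilon\to 0$ identifies $\chi=|\nabla z|^{p-2}\nabla z$ and simultaneously yields the norm convergence $\int_{\Omega_T}|\nabla z_\tau|^p\dxt\to\int_{\Omega_T}|\nabla z|^p\dxt$. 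Uniform convexity of $L^p(\Omega_T;\R^n)$ combined with the weak convergence of $\nabla z_\tau$ upgrades this to strong convergence $\nabla z_\tau\to\nabla z$ in $L^p(\Omega_T;\R^n)$, and together with \eqref{eqn:conv7} produces the desired subsequence $z_{\tau_k}\to z$ in $L^p(0,T;\Wa)$.
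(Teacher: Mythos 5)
Your overall strategy---test the discrete variational inequality \eqref{eqn:discrPde4} with an admissible truncation of the limit $z$, show that the resulting duality pairing converges to $\int_{\Omega_T}\chi\cdot\nabla z\dxt$, and close with Minty's trick plus uniform convexity---is the right one and is in the spirit of the argument the paper defers to. The gap sits exactly at the step you yourself flag as the core difficulty. Since $\nabla\min(z,z_\tau^-)=\chi_{\{z\le z_\tau^-\}}\nabla z+\chi_{\{z>z_\tau^-\}}\nabla z_\tau^-$ a.e., the correction term you must dispose of is $\int_{\{z>z_\tau^-\}}|\nabla z_\tau|^{p-2}\nabla z_\tau\cdot\bl\nabla z-\nabla z_\tau^-\br\dxt$, a product of two merely \emph{weakly} convergent sequences. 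Your claim that $|\{z>z_\tau^-\}|\to 0$ is unjustified and false in general: the monotonicity $z_\tau(t)\le z_\tau^-(t)$ compares two values of the \emph{same} discrete trajectory, whereas $z(t)-z_\tau^-(t)$ is an approximation error with no sign; for instance, if $z_\tau^-=z-\epsilon_\tau$ with $\epsilon_\tau\searrow 0$, then $(z-z_\tau^-)^+\to 0$ uniformly while $\{z>z_\tau^-\}=\Omega_T$ for every $\tau$. Hence the equi-integrability argument has nothing to act on, the $\limsup$ inequality is not established, and the proof does not close. (The remaining ingredients---admissibility of $\zeta_\tau$, the convergence $R_\tau\to 0$, and the final Minty/Radon--Riesz step---are fine.)

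The repair, which is essentially the ``tricky approximation'' the paper alludes to, is to choose a truncation of $z$ whose gradient does not see $\nabla z_\tau^-$ at all. Set $\alpha_\tau:=\|(z-z_\tau^-)^+\|_{L^\infty(\Omega_T)}$ and test with $\zeta_\tau:=(z-\alpha_\tau)^+-z_\tau$. Then $0\le(z-\alpha_\tau)^+\le z_\tau^-$ by the very definition of $\alpha_\tau$, so $\zeta_\tau$ is admissible. Moreover $\alpha_\tau\to 0$: since $p>n$, the bound $\|z_\tau\|_{L^\infty(0,T;\Wa)}<C$ together with $\|\partial_t\widehat z_\tau\|_{L^2(\Omega_T)}<C$ and an Aubin--Lions/Arzel\`a--Ascoli argument upgrades \eqref{eqn:conv7} to uniform convergence $z_\tau^-\to z$ on $\ol{\Omega_T}$. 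Finally $\nabla(z-\alpha_\tau)^+=\chi_{\{z>\alpha_\tau\}}\nabla z\to\nabla z$ \emph{strongly} in $L^p(\Omega_T;\R^n)$ by dominated convergence, so the duality pairing $\int_{\Omega_T}|\nabla z_\tau|^{p-2}\nabla z_\tau\cdot\nabla(z-\alpha_\tau)^+\dxt$ converges to $\int_{\Omega_T}\chi\cdot\nabla z\dxt$, and the rest of your Minty argument goes through verbatim.
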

  For a time discrete solution of the regularized system,
	we can prove the validity of an energy inequality of type \eqref{eqn:weak6}
	except the additional discretization error terms $e_\tau^1,\ldots,e_\tau^4$
	which will turn out to converge to $0$ in a certain sense as $\tau\searrow 0$.
	\begin{lemma} {\bf (Discrete energy inequality)}
	\label{lemma:energy_time_discrete}
		Let a time-discrete weak solution be given as in
		Lemma \ref{lemma:time_discrete}.  Then the following energy estimate is satisfied for a.e. $t\in(0,T)$:
		\begin{align}
			&\C E_\tau(t)+\C K_\tau(t)+\C D_\tau(0,t)+\int_0^{t_\tau}\bl e_\tau^1(s)+e_\tau^2(s)+e_\tau^3(s)+e_\tau^4(s)\br\ds\notag\\
		\label{eqn:discrEI}
			&\qquad\leq \C E_\tau(0)+\C K_\tau(0)+\C W_{\tau,\mathrm{ext}}(0,t)
		\end{align}
		with the discrete energies
		\begin{align*}
			&\C E_\tau(t):=\int_\Omega\bl\frac 1p|\nabla z_\tau(t)|^p+\frac 12|\nabla c_\tau(t)|^2+W(c_\tau(t),\e(u_\tau(t)),z_\tau(t))
				+f(z_\tau(t))\br\dx\\
			&\hspace*{4em} + \int_\Omega\Psi(c_\tau(t))\dx+\frac\delta2 \langle A u_\tau(t),u_\tau(t)\rangle_{H^2},\\
			&\C K_\tau(t):=\int_\Omega\frac12|v_\tau(t)|^2\dx,\\
			&\C D_\tau(0,t):=\int_0^{t_\tau}\int_\Omega\big(|\partial_t\widehat z_\tau|^2+\delta|\partial_t\widehat c_\tau|^2 +
				m(c_\tau^-,z_\tau^-)\nabla\mu_\tau\cdot\nabla\mu_\tau\big)\dxs,\\
			&\C W_{\tau,\mathrm{ext}}(0,t):=\int_0^{t_\tau}\int_\Omega W_{,e}(c_\tau,\e(u_\tau),z_\tau):\e(\partial_t\widehat b_\tau)\dxs\\
			&\qquad\qquad\qquad+\delta\int_0^{t_\tau}\langle A u_\tau(s),\partial_t\widehat b_\tau(s)\rangle_{H^2}\ds\\
			&\qquad\qquad\qquad +\int_0^{t_\tau}\int_\Omega l_\tau\cdot\bl \partial_t\widehat u_\tau-\partial_t\widehat b_\tau\br\dxs
				-\int_\Omega v^0\cdot\partial_t\widehat b_\tau(0)\dx\\
	&\qquad\qquad\qquad +\int_\Omega v_\tau(t)\cdot\partial_t \widehat b_\tau(t)\dx-\int_0^{t_\tau}\int_\Omega v_\tau^-(s)\cdot\frac{\partial_t\widehat b_\tau(s)-\partial_t\widehat b_\tau(s-\tau)}{\tau}\dxs,
\end{align*}
		and the error terms
		\begin{align*}
			e_\tau^1(t):={}&\int_\Omega\bl \frac{ W(c_\tau(t),\e(u_\tau^-(t)),z_\tau^-(t))-W(c_\tau(t),\e(u_\tau^-(t)),z_\tau(t))}{\tau}\br\dx \\
				& \qquad +\int_\Omega W_{,z}(c_\tau(t),\e(u_\tau(t)),z_\tau(t))\,\partial_t \widehat z_\tau(t)\dx,\\
			e_\tau^2(t):={}&\int_\Omega \bl \frac{W(c_\tau^-(t),\e(u_\tau^-(t)),z_\tau^-(t))-W(c_\tau(t),\e(u_\tau^-(t)),z_\tau^-(t))}{\tau}\br\dx\\
				& \qquad+\int_\Omega W_{,c}(c_\tau(t),\e(u_\tau(t)),z_\tau(t))\,\partial_t \widehat c_\tau(t)\dx,\\
			e_\tau^3(t):={}&\int_\Omega\frac{\Psi(c^-_\tau(t))-\Psi(c_\tau(t))}{\tau}\dx+
											\int_\Omega \Psi'(c_\tau(t))\,\partial_t \widehat c_\tau(t)\dx,\\
			e_\tau^4(t):={}&\int_\Omega\frac{f(z^-_\tau(t))-f(z_\tau(t))}{\tau}\dx+											\int_\Omega f'(z_\tau(t))\,\partial_t \widehat z_\tau(t)\dx.
		\end{align*}
	\end{lemma}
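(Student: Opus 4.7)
The strategy is to test each of the four time-discrete relations \eqref{eqn:discrPde1}--\eqref{eqn:discrPde4} at a fixed discrete time level with its natural incremental test function, exploit convexity to produce telescoping quadratic energy differences, and assemble the resulting one-step inequalities into \eqref{eqn:discrEI}. The key structural observation is that all four error quantities $e_\tau^1,\ldots,e_\tau^4$ are tailor-made Taylor-type remainders that exactly convert the integrals of $W_{,z}\,\partial_t\widehat z_\tau$, $W_{,c}\,\partial_t\widehat c_\tau$, $\Psi'(c_\tau)\,\partial_t\widehat c_\tau$ and $f'(z_\tau)\,\partial_t\widehat z_\tau$ into chain-rule-like energy differences.

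Concretely, I would first test \eqref{eqn:discrPde1} with $\tau\mu_\tau$ and \eqref{eqn:discrPde2} with $c_\tau-c_\tau^-=\tau\partial_t\widehat c_\tau$, and combine them so that $\mu_\tau$ drops out leaving the dissipation $\tau\int_\Omega m(c_\tau^-,z_\tau^-)|\nabla\mu_\tau|^2\dx+\tau\delta\int_\Omega|\partial_t\widehat c_\tau|^2\dx$; convexity of $\tfrac12|\cdot|^2$ turns $\int_\Omega\nabla c_\tau\cdot\nabla(c_\tau-c_\tau^-)\dx$ into the gradient increment $\tfrac12\|\nabla c_\tau\|_{L^2}^2-\tfrac12\|\nabla c_\tau^-\|_{L^2}^2$, while the remaining $W_{,c}$- and $\Psi'$-contributions are rewritten, by the very definitions of $e_\tau^2,e_\tau^3$, as $\int_\Omega[W(c_\tau,\e(u_\tau^-),z_\tau^-)-W(c_\tau^-,\e(u_\tau^-),z_\tau^-)]\dx+\tau e_\tau^2$ and $\int_\Omega[\Psi(c_\tau)-\Psi(c_\tau^-)]\dx+\tau e_\tau^3$. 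Next I would test \eqref{eqn:discrPde3} with $(u_\tau-u_\tau^-)-(b_\tau-b_\tau^-)$ so as to respect the Dirichlet datum; convexity of $\tfrac12|\cdot|^2$ on the inertial pairing and of $\tfrac12\langle A\cdot,\cdot\rangle_{H^2}$ produce the kinetic and $A$-energy increments, while the coercivity $\mathbf C(z)e:e\ge\eta|e|^2$ makes $W(c_\tau,\cdot,z_\tau)$ strictly convex and gives $\int_\Omega W_{,e}:\e(u_\tau-u_\tau^-)\dx\ge\int_\Omega[W(c_\tau,\e(u_\tau),z_\tau)-W(c_\tau,\e(u_\tau^-),z_\tau)]\dx$. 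Finally I would test \eqref{eqn:discrPde4} with $\zeta=z_\tau^--z_\tau\ge0$, admissible since $\zeta+z_\tau=z_\tau^-$; this yields the dissipation $\tau\|\partial_t\widehat z_\tau\|_{L^2}^2$, the $p$-Dirichlet increment $\tfrac1p\|\nabla z_\tau\|_{L^p}^p-\tfrac1p\|\nabla z_\tau^-\|_{L^p}^p$ from convexity of $\tfrac1p|\cdot|^p$, and, after invoking the definitions of $e_\tau^1,e_\tau^4$, the contributions $\int_\Omega[W(c_\tau,\e(u_\tau^-),z_\tau)-W(c_\tau,\e(u_\tau^-),z_\tau^-)]\dx+\tau e_\tau^1$ and $\int_\Omega[f(z_\tau)-f(z_\tau^-)]\dx+\tau e_\tau^4$.

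Summing the three combined one-step inequalities, the three partial $W$-differences telescope into $\int_\Omega[W(c_\tau,\e(u_\tau),z_\tau)-W(c_\tau^-,\e(u_\tau^-),z_\tau^-)]\dx$, so the LHS collects exactly $\C E_\tau(t)-\C E_\tau(t-\tau)+\C K_\tau(t)-\C K_\tau(t-\tau)$ plus the one-step dissipation and $\tau\sum_i e_\tau^i$, while the RHS carries the external body-force and Dirichlet-forcing contributions together with the inertia--boundary coupling $\tau\int_\Omega\partial_t\widehat v_\tau\cdot\partial_t\widehat b_\tau\dx$. Summing over $k=1,\dots,\lceil t/\tau\rceil$ and applying the discrete integration-by-parts identity \eqref{eqn:discrIntegrByParts} to the latter coupling produces precisely the two terms $\int_\Omega v_\tau(t)\cdot\partial_t\widehat b_\tau(t)\dx-\int_\Omega v^0\cdot\partial_t\widehat b_\tau(0)\dx$ and $-\int_0^{t_\tau}\int_\Omega v_\tau^-\cdot\tfrac{\partial_t\widehat b_\tau(s)-\partial_t\widehat b_\tau(s-\tau)}{\tau}\dxs$ featuring in $\C W_{\tau,\mathrm{ext}}$, and \eqref{eqn:discrEI} follows.

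The main technical delicacy is bookkeeping: the three partial $W$-differences, each produced by convexity in a different slot of $W$, must chain together along the mixed arguments $(c_\tau^-,\e(u_\tau^-),z_\tau^-)\to(c_\tau,\e(u_\tau^-),z_\tau^-)\to(c_\tau,\e(u_\tau^-),z_\tau)\to(c_\tau,\e(u_\tau),z_\tau)$, and this choice of intermediate states is precisely what forces the mixed evaluation points that appear inside $e_\tau^1$ and $e_\tau^2$. A closely related subtlety is the admissibility of $\zeta=z_\tau^--z_\tau$ in \eqref{eqn:discrPde4}: this relies on the irreversibility $z_\tau\le z_\tau^-$ built into the recursive minimization, and it is exactly what permits converting the $W_{,z}$- and $f'$-terms into signed energy differences rather than sign-indefinite contributions that would have to be absorbed by Gronwall.
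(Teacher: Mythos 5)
Your proposal is correct and follows essentially the same route as the paper: the same incremental test functions ($\tau\mu_\tau$, $c_\tau-c_\tau^-$, $u_\tau-u_\tau^--(b_\tau-b_\tau^-)$, $z_\tau^--z_\tau$), the same convexity estimates producing the telescoping energy increments, the same splitting of the $W$-increment along the chain of mixed intermediate states that generates $e_\tau^1$ and $e_\tau^2$, and the same final summation combined with the discrete integration-by-parts formula \eqref{eqn:discrIntegrByParts}. The only cosmetic difference is that the paper performs the full three-term decomposition of the $W$-difference (its estimate \eqref{eqn:W_c_est1}) inside the momentum-equation step rather than distributing the partial differences over the three tested equations, which changes nothing after summation.
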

	\begin{proof}
	We compute by using convexity of $W$ with respect to $e$:
	\begin{align}
	  \int_\Omega W_{,e}(c_\tau, \e(u_\tau),z_\tau)
		& :\e(u_\tau-u_\tau^-)\dx \notag\\
		& \geq\int_\Omega \bl W(c_\tau,\e(u_\tau),z_\tau)-W(c_\tau^-,\e(u_\tau^-),z_\tau^-)\br\dx  \notag\\
		&\qquad\quad+\int_\Omega\bl W(c_\tau,\e(u_\tau^-),z_\tau^-)-W(c_\tau,\e(u_\tau^-),z_\tau)\br\dx \notag \\
		&\qquad\quad+\int_\Omega \bl	W(c_\tau^-,\e(u_\tau^-),z_\tau^-)-W(c_\tau,\e(u_\tau^-),z_\tau^-)\br\dx.    \label{eqn:W_c_est1}
	 \end{align}
	We test \eqref{eqn:discrPde3} with $u_\tau-u_\tau^--(b_\tau-b_\tau^-)$, apply \eqref{eqn:W_c_est1}, use further convexity arguments
	and end up with
	\begin{align}
		&\frac12\left\|v_\tau(t)\right\|_{L^2}^2-\frac12\left\|v_\tau^-(t)\right\|_{L^2}^2
			+\frac\delta2 \langle A u_\tau(t),u_\tau(t)\rangle_{H^2}-\frac\delta2 \langle A u_\tau^-(t),u_\tau^-(t)\rangle_{H^2}\notag\\
		&+\int_\Omega\bl W(c_\tau(t),\e(u_\tau(t)),z_\tau(t))-W(c^-_\tau(t),\e(u_\tau^-(t)),z_\tau^-(t))\br\dx\\
		&-\int_\Omega\partial_t \widehat v_\tau(t)\cdot\bl b_\tau(t)-b_\tau^-(t)\br\dx\notag\\
		&+ \int_\Omega\bl W(c_\tau(t),\e(u_\tau^-(t)),z_\tau^-(t))-W(c_\tau(t),\e(u_\tau^-(t)),z_\tau(t))\br\dx \notag \\
                &  +\int_\Omega \bl W(c_\tau^-(t),\e(u_\tau^-(t)),z_\tau^-(t))-W(c_\tau(t),\e(u_\tau^-(t)),z_\tau^-(t))\br\dx  \notag\\
		&\qquad\leq \int_\Omega l_\tau(t)\cdot\bl u_\tau(t)-u_\tau^-(t)-(b_\tau(t)-b_\tau^-(t))\br\dx\notag\\
		&\qquad\quad+\int_\Omega W_{,e}(c_\tau(t),\e(u_\tau(t)),z_\tau(t)):\e(b_\tau(t)-b_\tau^-(t))\dx\notag\\
		&\qquad\quad+\delta\langle A u_\tau(t),b_\tau(t)-b_\tau^-(t)\rangle_{H^2}.
	\label{eqn:testEq21}
	\end{align}
	Using the convexity estimate
	$$\int_\Omega |\nabla z_\tau|^{p-2}\nabla z_\tau\cdot\nabla(z_\tau-z_\tau^-)\dx
		\geq\frac 1p\|\nabla z_\tau\|_{L^p}^p-\frac1p\|\nabla z_\tau^-\|_{L^p}^p $$
	and testing \eqref{eqn:discrPde4} with $z_\tau^--z_\tau$ yield
	\begin{align}
		&\frac 1p\|\nabla z_\tau(t)\|_{L^p}^p-\frac1p\|\nabla z_\tau^-(t)\|_{L^p}^p +\tau\left\|\partial_t\widehat z_\tau(t)\right\|_{L^2}^2\notag\\
	\label{eqn:testEq2}
		&\qquad\leq \int_\Omega\bl W_{,z}(\e(u_\tau(t)),z_\tau(t))+f'(z_\tau(t))\br(z_\tau^-(t)-z_\tau(t))\dx.
	\end{align}

	Next we test equation \eqref{eqn:discrPde1} with $\tau\mu_\tau$
        and \eqref{eqn:discrPde2} with $ (c_\tau - c_\tau^-)$ and add the two
        derived equations. We obtain
	by means of the convexity property
	$$
		\int_\Omega \nabla c_\tau \cdot \nabla(c_\tau-c_\tau^-)\dx \geq\frac 12 \|\nabla c_\tau\|_{L^2}^2-\frac1 2\|\nabla c_\tau^-\|_{L^2}^2
	$$
	the estimate
	\begin{multline}
	\label{eqn:testEq3}
	    \frac 12\|\nabla c_\tau(t)\|_{L^2}^2-\frac12\|\nabla c_\tau^-(t)\|_{L^2}^2
	    +\int_\Omega \Big( W_{,c}(c_\tau(t),\e(u_\tau(t)),z_\tau(t))(c_\tau(t)-c_\tau^-(t))+ \\
\Psi'(c_\tau(t))(c_\tau(t)-c_\tau^-(t)) +\tau \, m(c_\tau^-(t),z_\tau^-(t))\nabla\mu_\tau(t)\cdot  \nabla\mu_\tau(t) \Big)\dx
	    +\delta \, \tau \| \partial_t \widehat c_\tau(t)\|^2_{L^2}  \le 0 .
	\end{multline}
	Adding the estimates \eqref{eqn:testEq21}--\eqref{eqn:testEq3}, we end up with
	\begin{align*}
		&\frac12\left\|v_\tau(t)\right\|_{L^2}^2-\frac12\left\|v_\tau^-(t)\right\|_{L^2}^2
			+\frac\delta2 \langle A u_\tau(t),u_\tau(t)\rangle_{H^2}-\frac\delta2 \langle A u_\tau^-(t),u_\tau^-(t)\rangle_{H^2}\\
		&\quad +  \frac 12\|\nabla c_\tau(t)\|_{L^2}^2-\frac12\|\nabla c_\tau^-(t)\|_{L^2}^2
		  +\frac 1p\|\nabla z_\tau(t)\|_{L^p}^p-\frac1p\|\nabla z_\tau^-(t)\|_{L^p}^p\\
		&\quad +\tau\Big(
		   \left\|\partial_t\widehat z_\tau(t)\right\|_{L^2}^2
		  +  \delta\left\|\partial_t\widehat  c_\tau(t)\right\|_{L^2}^2    \
		  +\int_\Omega m(c_\tau^-(t),z_\tau^-(t))\nabla\mu_\tau(t)\cdot  \nabla\mu_\tau(t)  \dx \Big)\\
		& \quad -\int_\Omega\partial_t \widehat v_\tau(t)\cdot\bl
                  b_\tau(t)-b_\tau^-(t)\br\dx
 \\ & \qquad \quad
		  +\int_\Omega\bl W(c_\tau(t),\e(u_\tau(t)),z_\tau(t))-W(c^-_\tau(t),\e(u_\tau^-(t)),z_\tau^-(t))\br\dx\\
		&\quad+\int_\Omega f(z_\tau(t))\dx-\int_\Omega f(z_\tau^-(t))\dx+ \int_\Omega \Psi(c_\tau(t))\dx\\
		&\quad-\int_\Omega
		  \Psi(c_\tau^-(t))\dx + \tau \big( e_\tau^1(t)+ e_\tau^2(t) +  e_\tau^3(t)+ e_\tau^4(t)  \big)\\
		&\qquad\leq \int_\Omega l_\tau(t)\cdot\bl u_\tau(t)-u_\tau^-(t)-(b_\tau(t)-b_\tau^-(t))\br\dx\\
		&\qquad\quad
+ \int_\Omega
W_{,e}(c_\tau(t),\e(u_\tau(t)),z_\tau(t)):\e(b_\tau(t)-b_\tau^-(t))\dx
 \\ & \qquad \quad
		  +\delta\langle A u_\tau(t),b_\tau(t)-b_\tau^-(t)\rangle_{H^2}
	\end{align*}
	with the error terms $e_\tau^1(t)$, $e_\tau^2(t)$, $e_\tau^3(t)$ and $e_\tau^4(t)$.
	Summing over the discrete time points and taking into account the discrete integration by parts
	formula \eqref{eqn:discrIntegrByParts}, we finally obtain the claim.
        $\phantom{w}$\hspace{3cm}
		\end{proof}
	\vspace*{4pt}\noindent{\it{Proof of Theorem \ref{theorem:mainResult_delta}.}}
	  We are going to establish the equalities and inequalities of the weak
	  formulation \eqref{eqn:weak1_delta}-\eqref{eqn:weak6_delta}.
	  \begin{itemize}
	    \item (Cahn-Hilliard equation)\\
	      Because of the convergence properties \eqref{eqn:conv2}, \eqref{eqn:conv1}, \eqref{eqn:conv7}
	      and \eqref{eqn:conv9}  we may pass to the limit in \eqref{eqn:discrPde1} and obtain \eqref{eqn:weak1_delta}.

	      To establish \eqref{eqn:weak2_delta}, we first integrate \eqref{eqn:discrPde2}
	      over time from $t=0$ to $t=T$. The growth condition \eqref{eqn:assumption_psi} and
	      the convergence properties \eqref{eqn:conv9}, \eqref{eqn:conv0}, \eqref{eqn:conv1},
	      \eqref{eqn:conv3},  \eqref{eqn:conv7}  and \eqref{eqn:conv2}
	      allow us to pass to the limit in the integrated version of \eqref{eqn:discrPde2} which shows \eqref{eqn:weak2_delta}.
	    \item (Balance equation of forces)\\
	      By using the canonical embedding $L^2(\Omega;\R^n)\hookrightarrow (H_{\Gamma_\mathrm{D}}^2(\Omega;\R^n))^*$, it follows for all
	      $\zeta\in H_{\Gamma_\mathrm{D}}^2(\Omega;\R^n)$
	      $$\int_\Omega\partial_t\widehat v_\tau(t)\cdot\zeta\dx=\langle\partial_t\widehat v_\tau(t),\zeta\rangle_{H^2}.$$
	      Keeping this identity in mind, integrating \eqref{eqn:discrPde3} from $t=0$ to $t=T$ and passing to the limit $\tau\searrow0$ by using
	      \eqref{eqn:conv6} and \eqref{eqn:conv1},   \eqref{eqn:conv3}, \eqref{eqn:conv7} and \eqref{eqn:conv3_0}, we obtain \eqref{eqn:weak3_delta}.
	    \item (Variational inequality for $z$)\\
	      To obtain the variational inequalities \eqref{eqn:weak4_delta} and \eqref{eqn:weak5_delta}, we can proceed as in \cite{HK13_1}.  In particular,
	      \eqref{eqn:weak4_delta} is valid for the subgradient
	      \begin{align}
		\label{eqn:xiDef}
		  \xi=-\chi_{\{z=0\}}\mathrm{max}\Big\{0,W_{,z}(c,\e(u),z)+f'(z)\Big\},	      \end{align}
		which satisfies \eqref{eqn:weak5_delta}, where $\chi_{\{z=0\}}$ is the characteristic function of the set $\{z=0\}$.
	    \item (Energy inequality)\\
	      To treat the energy inequality  \eqref{eqn:discrEI}, we set
	      \begin{align*}
		&A_\tau(t):={}\\
		  &\quad\int_\Omega\Big(\frac 1p|\nabla z_\tau(t)|^p+\frac 12|\nabla c_\tau(t)|^2+W(c(t),\e(u_\tau(t)),z_\tau(t))+f(z_\tau(t))+ \Psi(c_\tau(t))\Big)\dx\\
		  &\quad-\int_\Omega\bl\frac1p|\nabla z^0|^p + \frac 12 |\nabla c^0|^2+W(c^0,\e(u^0),z^0)+f(z^0)+ \Psi(c^0)\br\dx\\
		  &\quad+\int_\Omega\frac12|v_\tau(t)|^2\dx-\int_\Omega\frac12|v^0|^2\dx
		   +\frac\delta2 \langle A u_\tau(t),u_\tau(t)\rangle_{H^2}-\frac\delta2 \langle A u^0,u^0\rangle_{H^2}\\
		  &\quad-\int_\Omega v_\tau(t)\cdot\partial_t \widehat b_\tau(t)\dx
		  	+\int_\Omega v^0\cdot\partial_t\widehat b_\tau(0)\dx\\
		&B_\tau(t):={}\\
		  &\quad\int_0^{t_\tau}\int_\Omega \Big(|\partial_t\widehat z_\tau|^2   + \delta |\partial_t\widehat c_\tau|^2
		    +m(c_\tau,z_\tau)\nabla\mu_\tau\cdot  \nabla\mu_\tau   \Big)\dxs\\
		  &\quad-\int_0^{t_\tau}\int_\Omega l_\tau\cdot\bl \partial_t\widehat u_\tau-\partial_t\widehat b_\tau\br\dxs\\
		  &\quad-\int_0^{t_\tau}\int_\Omega W_{,e}(c_\tau, \e(u_\tau),z_\tau):\e(\partial_t\widehat b_\tau)\dxs
		   -\delta\int_0^{t_\tau}\langle A u_\tau(s),\partial_t\widehat b_\tau(s)\rangle_{H^2}\ds\\
		  &\quad+\int_0^{t_\tau}\int_\Omega v_\tau^-(s)\cdot\frac{\partial_t\widehat b_\tau(s)-\partial_t\widehat b_\tau(s-\tau)}{\tau}\dxs,\\
		&E_\tau^1(t):={}\int_0^{t_\tau}e_\tau^1(s)\ds,\quad E_\tau^2(t):={}\int_0^{t_\tau}e_\tau^2(s)\ds, \\
		&E_\tau^3(t):={}\int_0^{t_\tau}e_\tau^3(s)\ds,\quad	E_\tau^4(t):={}\int_0^{t_\tau}e_\tau^4(s)\ds.
	      \end{align*}
	      Then, \eqref{eqn:discrEI} is equivalent to
	      \begin{align}
		\label{eqn:EIterms}
		  A_\tau(t)+B_\tau(t)+E_\tau^1(t)+E_\tau^2(t) +E_\tau^3(t)+E_\tau^4(t)\leq 0.
	      \end{align}
	      Furthermore, by the a priori estimates, we observe that
	      \begin{align}
		\label{eqn:bddness}
		  |A_\tau(t)|+|B_\tau(t)|+|E_\tau^1(t)|+|E_\tau^2(t)|+E_\tau^3(t)|+|E_\tau^4(t)|<C
	      \end{align}
	      for all $t\in[0,T]$ and for all $\tau>0$ (along a subsequence
	      $\tau_k$). Next, we consider the $\liminf_{\tau\searrow 0}$ of
	      each term in \eqref{eqn:EIterms} separately.
	\begin{itemize}
	  \item By the already proven convergence properties and by lower semi-continuity arguments, we obtain
	    \begin{align}
	      \label{eqn:Aconv}
		\liminf_{\tau\searrow 0}\int_{t_1}^{t_2} A_\tau(t)\dt\geq \int_{t_1}^{t_2} A(t)\dt \text{ for all }0\leq t_1\leq t_2\leq T,
	    \end{align}
	    where $A$ is defined as $A_\tau$ but $c_\tau$, $u_\tau$, $z_\tau$, $v_\tau$ and $\widehat b_\tau$ are substituted by their continuous limits.
	    Note that this $\liminf$--estimate does not necessarily hold
			pointwise a.e. in $t$ because, for instance,
			we do not know $v_\tau(t)\to v(t)$ weakly in $L^2(\Omega;\R^n)$ for a.e. $t$ (see \eqref{eqn:conv5}).
	  \item Let $0\leq t_1\leq t_2\leq T$ be arbitrary. By Fatou's lemma, by \eqref{eqn:conv8} and by a lower semi-continuity argument, we obtain
			\begin{align}
				\liminf_{\tau\searrow0}\int_{t_1}^{t_2}\int_0^{t_\tau}\int_\Omega|\partial_t\widehat z_\tau(s)|^2\dxs\dt
				&\geq\int_{t_1}^{t_2}\bl\liminf_{\tau\searrow 0}\int_0^{t_\tau}\int_\Omega|\partial_t\widehat z_\tau(s)|^2\dxs\br\dt\notag\\
				\label{eqn:Fatou1}
				&\geq \int_{t_1}^{t_2}\int_0^{t}\int_\Omega|\partial_t z(s)|^2\dxs\dt.
	    \end{align}
	    Analogously,
	    \begin{align}
	      \label{eqn:Fatou2}
		\liminf_{\tau\searrow0}\int_{t_1}^{t_2}\int_0^{t_\tau}\int_\Omega
	                   \delta     |\partial_t\widehat c_\tau(s)|^2\dxs\dt
		\geq \int_{t_1}^{t_2}\int_0^{t}\int_\Omega\delta|\partial_t c(s)|^2\dxs\dt
	    \end{align}
	    and, by \eqref{eqn:conv10},
	    \begin{multline}
	      \label{eqn:Fatou3}
		\liminf_{\tau\searrow0}\int_{t_1}^{t_2}\int_0^{t_\tau}\int_\Omega m(c_\tau^-(s),z_\tau^-(s))\nabla\mu_\tau(s)\cdot  \nabla\mu_\tau(s)   \dxs\dt\\
		    \geq \int_{t_1}^{t_2}\int_0^{t}\int_\Omega	m(c(s),z(s))\nabla\mu(s)\cdot  \nabla\mu(s)   \dxs\dt.
	    \end{multline}
	    Taking also \eqref{eqn:bddness} and the already known convergence properties into account, we obtain
	    \begin{align}
	      \label{eqn:Bconv}
		\liminf_{\tau\searrow0}\int_{t_1}^{t_2} B_\tau(t)\dt\geq \int_{t_1}^{t_2} B(t)\dt,
	    \end{align}
	    where $B$ is defined as $B_\tau$ but $c_\tau$, $\widehat c _\tau$,
	      $u_\tau$, $\widehat u_\tau$, $v_\tau^-$, $z_\tau$, $\widehat
	      z_\tau$, $\widehat \mu_\tau$ and $\widehat b_\tau$
	    are substituted by their continuous counterparts and $\frac{\partial_t\widehat b_\tau(t)-\partial_t\widehat b_\tau(t-\tau)}{\tau}$
	    by $\partial_{tt} b(t)$.
	  \item Due to the differentiability of $\mathbf C$ we have
			\begin{align}
			\label{eqn:diff}
				\mathbf C(z_\tau^-) = \mathbf C(z_\tau)+\mathbf C'(z_\tau)(z_\tau^--z_\tau)+r(z_\tau^--z_\tau),\;\frac{r(\eta)}{\eta}\to 0\text{ as }\eta\to 0.
			\end{align}
	    Hence, we obtain
	    \begin{align}
				&\int_0^{t}\int_\Omega\frac12\frac{\mathbf C(z_\tau^-)-\mathbf C(z_\tau)}{\tau}\big( \e(u_\tau^-) - e^*(c) \big) :
					\big( \e(u_\tau^-) - e^*(c) \big)\dxs\notag\\
				&\qquad=\int_0^{t}\int_{\{z_\tau^-(s)\neq z_\tau(s)\}}\frac 12\bl \mathbf C'(z_\tau)\frac{z_\tau^--z_\tau}{\tau}
					+\frac{r(z_\tau^--z_\tau)}{z_\tau^--z_\tau}\frac{z_\tau^--z_\tau}{\tau}\br\big( \e(u_\tau^-) - e^*(c) \big)\notag\\
			\label{eqn:energyTerm}
				&\hspace{5cm}:\big( \e(u_\tau^-) - e^*(c) \big)\dxs
			\end{align}
			Because of
			\begin{align*}
			\begin{split}
				\left\|\frac{r(z_\tau^--z_\tau)}{z_\tau^--z_\tau}\right\|_{L^\infty(\{z_\tau^-\neq z_\tau\})}
				\leq{}\left\|\frac{\mathbf C(z_\tau^-)-\mathbf
	      C(z_\tau)}{z_\tau^--z_\tau}\right\|_{L^\infty(\{z_\tau^-\neq
	      z_\tau\})} \hspace{3.3cm}\\
				\quad+\left\|\mathbf C'(z_\tau)\frac{z_\tau^--z_\tau}{z_\tau^--z_\tau}\right\|_{L^\infty(\{z_\tau^-\neq z_\tau\})}<C,
			\end{split}
			\end{align*}
	    and $\frac{r(z_\tau^--z_\tau)}{|z_\tau^--z_\tau|}\to 0$ a.e. in $\Omega_T$ as $\tau\searrow0$ we conclude
	    by Lebesgue's generalized convergence theorem
	    \begin{align*}
				&\left\|\frac{r(z_\tau^--z_\tau)}{z_\tau^--z_\tau}\right\|_{L^q(\{z_\tau^-\neq z_\tau\})}\to0\text{ for every }q\geq1.
	    \end{align*}
	    Using this and the already known convergence properties, we end up with
	    \begin{align*}
		  	\textit{left hand side of \eqref{eqn:energyTerm}} \to \int_{\Omega_t}W_{,z}(c,\e(u),z)\partial_t z\dxs
	    \end{align*}
	    and, consequently, $E_\tau^1(t)\to0$ as $\tau\searrow0$. Together with the uniform boundedness \eqref{eqn:bddness}, this implies
	    \begin{align}
	      \label{eqn:Econv}
		\int_{t_1}^{t_2} E_\tau^1(t)\dt\to 0\text{ as }\tau\searrow0 \text{ for all }0\leq t_1\leq t_2\leq T.
	    \end{align}
	    The convergence
	    \begin{align}
	      \label{eqn:EEconv}
		\int_{t_1}^{t_2} E_\tau^4(t)\ds\to 0\text{ as }\tau\searrow0\text{ for all }0\leq t_1\leq t_2\leq T,
	    \end{align}
	    can be shown as above.
	  \item
			Noticing the linearity of $e^*$, a short calculation yields
			\begin{align*}
			\begin{split}
				&\int_0^{t_\tau}\int_\Omega\frac{W(c^-_\tau, \e(u_\tau^-),z^-_\tau) - W(c_\tau, \e(u_\tau^-),z^-_\tau)}{\tau}\dx\\
				&\qquad=\int_0^{t_\tau}\int_\Omega \mathbf C(z^-_\tau)\Big(\e(u_\tau^-)-e^*\Big(\frac{c_\tau^-+c_\tau}{2}\Big)\Big):e^*(\partial_t\widehat c_\tau)\dxs.
			\end{split}
			\end{align*}
			Due to the already known convergence properties, we obtain
			\begin{align*}
				\int_0^{t_\tau}\int_{\Omega}\frac{W(c^-,\e(u^-),z^-)-W(c,\e(u^-),z^-) }{\tau}\dxs \to -\int_{\Omega_t}W_{,c}(c,\e(u),z)\partial_t c\dxs
			\end{align*}
			and, consequently, $E_\tau^2(t)\to0$ as $\tau\searrow0$. Together with the uniform boundedness \eqref{eqn:bddness}, this implies
			\begin{align}
				\label{eqn:Econv2}
				\int_{t_1}^{t_2} E_\tau^2(t)\dt\to 0\text{ as }\tau\searrow0 \text{ for all }0\leq t_1\leq t_2\leq T.
			\end{align}
		\item The claim
			\begin{align}
				\label{eqn:EEconv2}
				\liminf_{\tau\searrow0}\int_{t_1}^{t_2} E_\tau^3(t)\dt\geq 0\text{ for all }0\leq t_1\leq t_2\leq T
			\end{align}
			can be shown by the following arguments:
			On the one hand, convexity of $\Psi_1$ yields
			$$
				\frac{\Psi_1(c^-_\tau)-\Psi_1(c_\tau)}{\tau}+\Psi_1'(c_\tau)\partial_t\widehat c_\tau\geq 0.
			$$
			On the other hand, by using the differentiability property of $\Psi_2$, we obtain (cf. \eqref{eqn:diff})
			$$
				\frac{\Psi_2(c^-_\tau) -\Psi_2(c_\tau)}{\tau}+\Psi_2'(c_\tau)\partial_t\widehat c_\tau
				=\frac{r(c_\tau^--c_\tau)}{\tau}
				\text{ with }\frac{r(\eta)}{\eta}\to 0\text{ as }\eta\to 0.
			$$
			In the non-trivial case $c_\tau^--c_\tau\neq 0$, we can argue as follows.
			Since $\frac{r(c_\tau^--c_\tau)}{\tau}=\frac{r(c_\tau^--c_\tau)}{c_\tau^--c_\tau}\frac{c_\tau^--c_\tau}{\tau}$
			and since $\frac{c_\tau^--c_\tau}{\tau}$ is bounded in $L^2(\Omega_T)$,
			it remains to show
			\begin{align}
			\label{eqn:rConv}
				\frac{r(c_\tau^--c_\tau)}{c_\tau^--c_\tau}\to 0\text{ in }L^2(\Omega_T)\text{ as }\tau\searrow0.
			\end{align}
			Indeed, it converges pointwise to $0$ a.e. in $\Omega_T$ and applying the mean value theorem yields
			(here $\xi\in[\min\{c_\tau^-,c_\tau\},\max\{c_\tau^-,c_\tau\}]$)
			\begin{align*}
				\left|\frac{r(c_\tau^--c_\tau)}{c_\tau^--c_\tau}\right|
					&=\left|\frac{\Psi_2(c^-_\tau) -\Psi_2(c_\tau)}{c_\tau^--c_\tau}-\Psi'_2(c_\tau)\right|\\
					&\leq|\Psi_2'(\xi)|+|\Psi'_2(c_\tau)|\\
					&\leq C(1+|\xi|+|c_\tau|)\\
					&\leq C(1+|c_\tau^-|+2|c_\tau|).
			\end{align*}
			Therefore, the left hand side is bounded in $L^\infty(0,T;L^{2^*}(\Omega))$.
			Lebesgue's generalized convergence theorem yields \eqref{eqn:rConv}.
			We end up with \linebreak
			$\liminf_{\tau\searrow0} E_\tau^3(t)\geq 0$ as $\tau\searrow0$.
			Fatou's lemma shows the claim.
	\end{itemize}
	If we combine \eqref{eqn:Aconv}, \eqref{eqn:Bconv}, \eqref{eqn:Econv}, \eqref{eqn:Econv2},  \eqref{eqn:EEconv}
	and \eqref{eqn:EEconv2}, we finally obtain
	\begin{align*}
	  0&\geq \liminf_{\tau\searrow0}\int_{t_1}^{t_2}\bl A_\tau(t)+B_\tau(t)+E_\tau^1(t)+E_\tau^2(t) + E_\tau^3(t)+E_\tau^4(t)\br\dt\\
	   &\geq \int_{t_1}^{t_2}\bl A(t)+B(t)\br\dt.
	\end{align*}
	for all $0\leq t_1\leq t_2\leq T$. Thus, $A(t)+B(t)\leq 0$ for a.e.~$t\in(0,T)$ which is the desired energy inequality \eqref{eqn:weak4}.
	\end{itemize}
	Hence, we obtain existence of weak solutions in the sense of Definition \ref{def:regWeakSolution}.
	\ep

\subsection{Existence proof for the limit system}
	\label{section:limit}

	We now study the limit $\delta\searrow0$. For each $\delta>0$, we obtain a weak solution $(c_\delta, u_\delta, z_\delta, \mu_\delta, \xi_\delta)$
	in the sense of Definition \ref{def:regWeakSolution}.

	\begin{lemma}[\textbf{A priori estimates}]
	\label{lemma:apriori_delta}
	There exists a constant $C>0$ independent of $\delta$ such that
	\begin{itemize}
	  \item[(i)]  $\|c_\delta\|_{L^\infty(0,T;H^1(\Omega))} < C$, \quad
			$\sqrt{\delta}\| \partial_t c_\delta\|_{L^2(0,T;L^2(\Omega))}<C$,\\[1mm]
			$\| \partial_t c_\delta\|_{L^2(0,T;(H^1(\Omega))^*)} < C,$
	  \item[(ii)] $\|u_\delta\|_{L^\infty(0,T;H^1(\Omega;\R^n))\cap W^{1,\infty}(0,T;L^2(\Omega;\R^n))}<C$, \quad
			$\sqrt{\delta}\|u_\delta\|_{L^\infty(0,T;H^2(\Omega;\R^n))}<C$,\\[2mm]
			$\|u_\delta\|_{H^2(0,T;(H_{\Gamma_\mathrm{D}}^2(\Omega;\R^n))^*)}<C$,
		\item[(iii)]  $\|z_\delta\|_{L^\infty(0,T;\Wa)\cap H^1(0,T;L^2(\Omega))}<C$,
		\item[(iv)]   $\|\mu_\delta\|_{L^2(0,T;H^1(\Omega))}<C$, \quad $\|m(c_\delta, z_\delta)^{1/2} \nabla \mu_\delta\|_{L^2(0,T; L^2(\Omega; \R^n))}<C.$
		\end{itemize}
	\end{lemma}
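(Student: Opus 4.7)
The plan is to derive every bound from the energy inequality \eqref{eqn:weak6_delta}, which is the continuous-in-time analogue of the discrete estimate \eqref{eqn:discrEI} whose absorption/Gronwall mechanism has already been worked out in Lemma \ref{lemma:apriori_tau}. First I would observe that $\mathcal E(0)+\mathcal K(0)$ is bounded uniformly in $\delta$: the terms $\tfrac12|\nabla c^0|^2$, $W(c^0,\e(u^0),z^0)$, $f(z^0)$ and $\tfrac12|v^0|^2$ are finite by the data regularity; $\Psi(c^0)$ is finite because $c^0\in H^1(\Omega)\hookrightarrow L^{2^\star}(\Omega)$ and $\Psi$ has at most $L^{2^\star}$-integrable growth by \eqref{eqn:assumption_psi}; and $\tfrac{\delta}{2}\langle A u^0,u^0\rangle_{H^2}$ is either assumed zero or will be absorbed via a mollification of $u^0$ in the initial step, so that $\delta\,\|\nabla\nabla u^0\|_{L^2}^2\to 0$ as $\delta\to 0$. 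The solution-independent pieces of $\mathcal W_{\mathrm{ext}}(0,t)$ are controlled by the hypotheses $b\in H^1(0,T;H^2)\cap W^{2,1}(0,T;L^2)$, $l\in L^2(0,T;L^2)$ and $v^0\in L^2$.

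Next I would estimate the solution-dependent terms in $\mathcal W_{\mathrm{ext}}(0,t)$ via Young's inequality with small parameter $\eta>0$: using $|W_{,e}(c,\e(u),z)|\leq C(|\e(u)|+|c|)$,
\[
\Big|\int_\Omega W_{,e}\!:\!\e(\partial_t b)\,\mathrm dx\Big|\leq \eta\bl\|\e(u)\|_{L^2}^2+\|c\|_{L^2}^2\br+C_\eta\|\e(\partial_t b)\|_{L^2}^2,
\]
and analogously
\[
|\delta\langle Au,\partial_t b\rangle_{H^2}|\leq \eta\,\delta\|u\|_{H^2}^2+C_\eta\,\delta\|\partial_t b\|_{H^2}^2,
\]
together with $|\int \partial_t u\!\cdot\!\partial_{tt}b|+|\int l\!\cdot\!\partial_t u|+|\int \partial_t u(t)\!\cdot\!\partial_t b(t)|\leq \eta\|\partial_t u\|_{L^2}^2+C_\eta(\|\partial_{tt}b\|_{L^2}^2+\|l\|_{L^2}^2+\|\partial_t b(t)\|_{L^2}^2)$. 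The $\eta$-terms are absorbed into the LHS by the coercivity \eqref{eqn:assumptionC} of $\mathbf C$ (yielding $W\ge\tfrac{\eta}{2}|\e(u)-c\hat e|^2$, hence $\mathcal E(t)\gtrsim \|\e(u(t))\|_{L^2}^2-C\|c(t)\|_{L^2}^2$), into $\tfrac{\delta}{2}\langle Au(t),u(t)\rangle_{H^2}\subseteq\mathcal E(t)$, and into $\mathcal K(t)$; mass conservation $\int_\Omega c(t)\,\mathrm dx=\int_\Omega c^0\,\mathrm dx$ (obtained by taking $\zeta\equiv 1$ in \eqref{eqn:weak1_delta}) combined with Poincar\'e--Wirtinger then promotes $\|\nabla c\|_{L^2}^2$ to $\|c\|_{H^1}^2$, while Korn on $u-b\in H^1_{\Gamma_\mathrm{D}}(\Omega;\R^n)$ promotes $\|\e(u)\|_{L^2}$ to $\|u\|_{H^1}$. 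A standard Gronwall argument then yields (iii), the $\sqrt{\delta}\|\partial_t c\|_{L^2(L^2)}$ bound, the $\|m^{1/2}\nabla\mu\|_{L^2(L^2)}$ bound, as well as $\|c\|_{L^\infty(H^1)}$, $\|u\|_{L^\infty(H^1)\cap W^{1,\infty}(L^2)}$ and $\sqrt{\delta}\|u\|_{L^\infty(H^2)}$; $\|\nabla\mu\|_{L^2(L^2)}$ then follows from $m\ge C_1>0$.

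Finally, for the remaining bounds in (i), (ii), (iv), I would test the Euler--Lagrange equations directly: testing \eqref{eqn:weak1_delta} against $\zeta\in H^1(\Omega)$ gives $|\langle\partial_t c,\zeta\rangle|\leq C_2\|\nabla\mu\|_{L^2}\|\nabla\zeta\|_{L^2}$, hence $\|\partial_t c\|_{L^2((H^1)^*)}\leq C\|\nabla\mu\|_{L^2(L^2)}$; testing \eqref{eqn:weak2_delta} with $\zeta\equiv 1$ and again using mass conservation $\int_\Omega\partial_t c\,\mathrm dx=0$ eliminates the $\delta\partial_t c$ contribution, so $|\int_\Omega\mu(t)\,\mathrm dx|\leq C(1+\|c(t)\|_{L^{2^\star}}^{2^\star/2})$ by \eqref{eqn:assumption_psi}, which together with $\nabla\mu\in L^2(L^2)$ and Poincar\'e--Wirtinger produces $\mu\in L^2(H^1)$; and testing \eqref{eqn:weak3_delta} against $\zeta\in H^2_{\Gamma_\mathrm{D}}(\Omega;\R^n)$ gives $|\langle\partial_{tt}u,\zeta\rangle_{H^2}|\leq C(\|\e(u)\|_{L^2}+\|c\|_{L^2}+\sqrt{\delta}\cdot\sqrt{\delta}\|u\|_{H^2}+\|l\|_{L^2})\|\zeta\|_{H^2}$, which is uniformly in $L^2(0,T)$. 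The main technical obstacle is handling the $\delta$-weighted term $\delta\langle Au,\partial_t b\rangle_{H^2}$ in $\mathcal W_{\mathrm{ext}}$: since only $\sqrt{\delta}\|u\|_{H^2}$ is controlled, one must split $\delta=\sqrt{\delta}\cdot\sqrt{\delta}$ in Young's inequality so that the absorbed part enters at the $\delta\|u\|_{H^2}^2$ level (matching the LHS energy) and the remainder is $\delta\|\partial_t b\|_{H^2}^2$, uniformly bounded by $\|b\|_{H^1(0,T;H^2)}^2$. A secondary subtlety is that the $\eta\|c\|_{L^2}^2$ terms produced by estimating $W_{,e}$ are not directly absorbed by the dissipation and require the Poincar\'e--Wirtinger step above before Gronwall can be closed.
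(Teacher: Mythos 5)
Your proposal is correct and follows essentially the same route as the paper: all bounds except three are read off from the energy inequality \eqref{eqn:weak6_delta} (with the external-work terms absorbed via Young, Korn and Poincar\'e before Gronwall), while $\|\partial_t c_\delta\|_{L^2((H^1)^*)}$, $\|\mu_\delta\|_{L^2(H^1)}$ and $\|u_\delta\|_{H^2((H^2_{\Gamma_\mathrm{D}})^*)}$ come from comparison arguments in \eqref{eqn:weak1_delta}, \eqref{eqn:weak2_delta} (tested with $\zeta\equiv 1$, using mass conservation and Poincar\'e--Wirtinger) and \eqref{eqn:weak3_delta}, exactly as in the paper. Your explicit treatment of the $\delta$-weighted terms and of $\tfrac{\delta}{2}\langle Au^0,u^0\rangle_{H^2}$ is more detailed than the paper's one-line reference to the energy inequality, but it is the same argument.
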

	\begin{proof}
	From the energy inequality \eqref{eqn:weak6_delta}, we infer the second inequality of $(i)$, the first two inequalities of $(ii)$,
	$(iii)$ and the second inequality of $(iv)$.
	By considering \eqref{eqn:weak3_delta}, we get
	\begin{align*}
	  \langle \partial_{tt} u_\delta(t),\zeta\rangle_{H^2}\leq{}
	    & C\Big(\|\e(u_\delta(t))\|_{L^2} +\|c_\delta(t)\|_{L^2}\Big)\|\e(\zeta)\|_{L^2}\\
	    &+\delta\|\nabla\bl\nabla u_\delta(t)\br\|_{L^2}\|\nabla\bl\nabla \zeta\br\|_{L^2}
	    +\|l\|_{L^2}\|\zeta\|_{L^2}
	\end{align*}
	and, therefore,
	\begin{align*}
	  \|u_\delta\|_{H^2(0,T;(H_{\Gamma_\mathrm{D}}^2(\Omega;\R^n))^*)}&<C.
	\end{align*}
	Due to $\int_\Omega c_\delta(t) \dx = const.$ and the boundedness
	of $\| \nabla c_\delta(t) \|_{L^2(\Omega)}$, we derive by Poincar{\'e}'s inequality
	the first inequality of $(i)$.

	From  \eqref{eqn:weak1_delta} and  \eqref{eqn:weak2_delta} we obtain
	boundedness of $\int_\Omega \mu_\delta(t) \dx$. Since
	$\| \nabla \mu_\delta (t) \|_{L^2(\Omega_T)}$ is also bounded,
	Poincar{\'e}'s inequality yields the first inequality of $(iv)$.

	Finally, we know from the boundedness of $\{ \nabla \mu_\delta \}$
	in $L^2(\Omega_T;\R^n)$ that $\{ \partial_t c_\delta \}$ is also bounded in
	$L^2(0,T;(H^1(\Omega))^*) $ with respect to $\delta$ by
	applying equation \eqref{eqn:weak1_delta}. Hence, the third inequality of (i) is satisfied.
	\end{proof}
	\begin{lemma}[\textbf{Convergence properties}]
	There exist functions
	\begin{align*}
	  &c\in L^\infty(0,T;H^1(\Omega))\cap H^1(0,T;(H^1(\Omega))^*),\\
	  &u\in L^\infty(0,T;H^1(\Omega;\R^n))\cap W^{1,\infty}(0,T;L^2(\Omega;\R^n))\cap H^2(0,T;(H_{\Gamma_\mathrm{D}}^2(\Omega;\R^n))^*),\\\
	  &z\in L^\infty(0,T;\Wa)\cap H^1(0,T;L^2(\Omega)),\\
	  &\mu\in L^2(0,T;H^1(\Omega))
	\end{align*}
	and subsequences (omitting the subscript) such that for all $r\geq 1$ and $s<2^*$:
	\begin{align}
		c_{\delta} &\to c&&\text{ weakly-star in }\notag\\
		\label{eqn:conv1a_delta}
		&&&\quad L^\infty(0,T;H^1(\Omega))\cap H^1(0,T;(H^1(\Omega))^*),\\
	  \label{eqn:conv1b_delta}
		&&&\text{ strongly in }L^2(\Omega_T),\text{ a.e. in }\Omega_T,\\
	  \label{eqn:conv3a_delta}
	    u_{\delta} &\to u&&\text{ weakly-star in }L^\infty(0,T;H^1(\Omega;\R^n)),\\
                       &&&\text{ weakly-star in }  \label{eqn:conv3b_delta}
                       	W^{1,\infty}(0,T;L^2(\Omega;\R^n)), \\
	  \label{eqn:conv7a_delta}
	    z_{\delta}&\to z&&\text{ weakly-star in }
\\ && &\quad
L^\infty(0,T;\Wa)\cap H^1(0,T;L^2(\Omega)),\\
	  \label{eqn:conv7b_delta}
			&&&\text{ strongly in }L^r(0,T;L^r(\Omega;\R^n)),\\
			&&&\text{ a.e. in }\Omega_T,\\
	  \label{eqn:conv7c_delta}
			&&&\text{ strongly in } L^p(0,T;W^{1,p}(\Omega;\R^n)),\\
	  \label{eqn:conv7d_delta}
                	&&&\text{ strongly in } C(\overline{\Omega_T}),\\
	  \label{eqn:conv9_delta}
	    \mu_{\delta}&\to \mu&&\text{ weakly in } L^2(0,T;H^1(\Omega)),\\
	   \label{eqn:conv10_delta}
	    m(c_\delta,z_\delta)^{1/2} \nabla \mu_{\delta}&\to m(c,z)^{1/2} \nabla \mu&&\text{ weakly in }L^2(0,T;L^2(\Omega, \R^n))
	\end{align}
	as $\delta\searrow 0$.
        \end{lemma}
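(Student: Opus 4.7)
The plan is to read off the weak and weak-star convergences directly from the $\delta$-independent a priori bounds of Lemma \ref{lemma:apriori_delta} via Banach-Alaoglu in reflexive spaces and in duals of separable ones. From $(i)$--$(iv)$ we obtain, along a subsequence, limits $c$, $u$, $z$, $\mu$ in the asserted spaces and the weak-star convergences \eqref{eqn:conv1a_delta}, \eqref{eqn:conv3a_delta}, \eqref{eqn:conv3b_delta}, \eqref{eqn:conv7a_delta} as well as the weak convergence \eqref{eqn:conv9_delta}. The $\delta$-dependent bounds $\sqrt\delta\|\partial_t c_\delta\|_{L^2(L^2)}\le C$ and $\sqrt\delta\|u_\delta\|_{L^\infty(H^2)}\le C$ are simply discarded in the limit, which is the reason that only $H^1$-regularity of $u$ and only a $(H^1)^*$-regularity of $\partial_t c$ survive.

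Next, I would upgrade these weak statements to strong ones by Aubin-Lions-Simon compactness. For $c_\delta$, combining the bound in $L^\infty(0,T;H^1(\Omega))$ with the bound on $\partial_t c_\delta$ in $L^2(0,T;(H^1(\Omega))^*)$ from Lemma \ref{lemma:apriori_delta}(i), together with the compact embedding $H^1(\Omega)\hookrightarrow L^2(\Omega)$, yields the strong convergence in $L^2(\Omega_T)$ and, along a further subsequence, a.e.\ in $\Omega_T$; this gives \eqref{eqn:conv1b_delta}. For $z_\delta$, the bounds in $L^\infty(0,T;W^{1,p}(\Omega))$ and $H^1(0,T;L^2(\Omega))$, together with the compact embedding $W^{1,p}(\Omega)\hookrightarrow C(\overline\Omega)$ valid for $p>n$, produce strong convergence in $L^r(0,T;L^r(\Omega))$ for every $r\geq 1$ and uniform convergence on $\overline{\Omega_T}$, giving \eqref{eqn:conv7b_delta} and \eqref{eqn:conv7d_delta}. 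Pointwise convergence a.e.\ in $\Omega_T$ follows after one further subsequence extraction.

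The main obstacle will be the strong convergence of the gradients of $z_\delta$ asserted in \eqref{eqn:conv7c_delta}, since weak convergence in $W^{1,p}$ does not by itself imply norm convergence of the gradients. Here I would reuse the approximation argument underlying Lemma \ref{lemma:strongConvZ}, following \cite{HK13_1}: one tests the damage variational inequality \eqref{eqn:weak4_delta} at level $\delta$ against a carefully constructed competitor that approximates the limit $z$ from above while respecting the irreversibility constraint, uses monotonicity of the $p$-Laplacian $-\Delta_p$ together with the already established strong convergences of $c_\delta$, $u_\delta$ in lower-order norms and the strong $L^p$-convergence of $z_\delta$ itself, and thereby extracts from $\int_\Omega(|\nabla z_\delta|^{p-2}\nabla z_\delta-|\nabla z|^{p-2}\nabla z)\cdot\nabla(z_\delta-z)\dx$ a vanishing upper bound; strict monotonicity then forces $\nabla z_\delta\to\nabla z$ strongly in $L^p(\Omega_T;\R^n)$.

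Finally, \eqref{eqn:conv10_delta} is obtained by combining the weak convergence $\nabla\mu_\delta\weaklim\nabla\mu$ in $L^2(\Omega_T;\R^n)$ from \eqref{eqn:conv9_delta} with the pointwise a.e.\ convergence $(c_\delta,z_\delta)\to(c,z)$ from \eqref{eqn:conv1b_delta} and \eqref{eqn:conv7b_delta}, the continuity of $m$, and the two-sided bound $C_1\leq m\leq C_2$ from \eqref{eqn:assumption_m}. Dominated convergence gives $m(c_\delta,z_\delta)^{1/2}\to m(c,z)^{1/2}$ strongly in every $L^r(\Omega_T)$, and the product of a strongly convergent bounded factor and a weakly convergent factor converges weakly in $L^2(0,T;L^2(\Omega;\R^n))$ to the product of the limits, yielding \eqref{eqn:conv10_delta}.
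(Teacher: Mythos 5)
Your proposal is correct and follows essentially the same route as the paper: weak and weak-star limits extracted from the $\delta$-uniform bounds of Lemma \ref{lemma:apriori_delta}, Aubin--Lions--Simon compactness for the strong convergences of $c_\delta$ in $L^2(\Omega_T)$ and of $z_\delta$ in $C(\overline{\Omega_T})$, the approximation/monotonicity technique of Lemma \ref{lemma:strongConvZ} (from \cite{HK13_1}) for the strong $L^p$-convergence of $\nabla z_\delta$, and identification of the weak limit of $m(c_\delta,z_\delta)^{1/2}\nabla\mu_\delta$ via the pointwise convergence of $(c_\delta,z_\delta)$ and the bounds on $m$. The only point the paper flags that you omit is that the argument for \eqref{eqn:conv7c_delta} uses the sign condition $\mathbf C'(\cdot)\geq 0$ from \eqref{eqn:assumptionC}, but this is consistent with your (correct) sketch of that step.
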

        \begin{proof}
	Lemma \ref{lemma:apriori_delta} reveals the existence of functions
	\begin{align*}
	  &c\in L^\infty(0,T;H^1(\Omega))      \\
	  &u\in L^\infty(0,T;H^1(\Omega;\R^n))\cap W^{1,\infty}(0,T;L^2(\Omega;\R^n))\cap H^2(0,T;(H_{\Gamma_\mathrm{D}}^2(\Omega;\R^n))^*),\\
	  &z\in L^\infty(0,T;\Wa)\cap H^1(0,T;L^2(\Omega)),\\
	  &\mu \in L^2(0,T;H^1(\Omega)) , \\
	  & m(c,z)^{1/2} \nabla \mu \in L^2(0,T;L^2(\Omega, \R^n))
	\end{align*}
	and subsequences indexed by $\delta_k$ such that
	\begin{align}
	  c_{\delta_k}&\to c&&\text{ weakly-star in } L^\infty(0,T;H^1(\Omega)),\\
	  u_{\delta_k}&\to u&&\text{ weakly-star in }
\\ && & \hspace{3mm}
L^\infty(0,T;H^1(\Omega;\R^n))\cap W^{1,\infty}(0,T;L^2(\Omega;\R^n)),\\
	  z_{\delta_k}&\to z&&\text{ weakly-star in }L^\infty(0,T;\Wa)\cap H^1(0,T;L^2(\Omega)), \\
	  \mu_{\delta_k}&\to \mu&&\text{ weakly in }L^2(0,T;H^1(\Omega)),\\
	  m(c_{\delta_k},z_{\delta_k})^{1/2} \nabla \mu_{\delta_k}&\to w&&\text{ weakly in } L^2(0,T;L^2(\Omega;\R^n)).
	\end{align}
	Due to the strong convergence properties of $\{c_{\delta_k}\}$,
	$\{z_{\delta_k}\}$ and the growth assumptions on
	the mobility function $m$, we infer
	$$
		w=m(c,z)^{1/2} \nabla \mu.
	$$
	In the following, we omit the subscript $k$.
  Furthermore, property (i) of Lemma \ref{lemma:apriori_delta} shows that $\{c_\delta\}$ converges
        strongly to an element $c$ in $L^2(\Omega_T)$ as $\delta \searrow 0$ for a
        subsequence by a compactness result due to Aubin and Lions
        (\cite{Simon}). By choosing a further subsequence we also obtain
        pointwise almost everywhere convergence. 

	By applying the same technique as for Lemma \ref{lemma:strongConvZ}, strong convergence of $\nabla z_{\delta}$ in $L^p(\Omega_T;\R^n)$
	can be obtained.
	Note that we need the assumption $\mathbf C'(\cdot)\geq 0$, see \eqref{eqn:assumptionC}.
	We conclude that
	\begin{align*}
	  &z_{\delta}\to z&&\text{ strongly in } L^p(0,T;W^{1,p}(\Omega)).\hspace*{15.2em}
	\end{align*}
	Furthermore, by Lemma \ref{lemma:apriori_delta} (iii), we find
	\begin{align*}
	  &z_{\delta}\to z&&\text{ strongly in } \C C(\ol{\Omega_T})\hspace*{20.4em}
	\end{align*}
	for a subsequence by an Aubin-Lions type compactness result (cf. \cite{Simon}).
	\end{proof}

	Next, we will proof our main result.
	
\vspace*{4pt}\noindent{\it{Proof of Theorem \ref{theorem:mainResult}.}}
	\begin{itemize}
	  \item (Cahn-Hilliard equation)\\
            Writing \eqref{eqn:weak1_delta} in the form
	    \begin{align*}
	      \int_{\Omega_T}(c_\delta-c^0)\partial_t\zeta\dxt=\int_{\Omega_T}m(c_\delta,z_\delta)\nabla\mu_\delta\cdot\nabla\zeta\dxt,
	    \end{align*}
	    by only allowing test-functions $\zeta \in L^2(0,T;H^1(\Omega))\cap H^1(0,T;L^2(\Omega))$ with $\zeta(T)=0$ we
	    may pass to the limit by means of the convergence properties \eqref{eqn:conv1b_delta}, \eqref{eqn:conv7c_delta}
	    and \eqref{eqn:conv9_delta} and receive \eqref{eqn:weak1}.

	    Equation \eqref{eqn:weak2} can be obtained by integrating \eqref{eqn:weak2_delta} over time and taking advantage of the
	    convergence properties \eqref{eqn:conv9_delta}, \eqref{eqn:conv1a_delta}, \eqref{eqn:conv1b_delta}, \eqref{eqn:conv3a_delta},
	    \eqref{eqn:conv7c_delta} and Lemma \ref{lemma:apriori_delta} (i).
	 \item (Balance equation of forces)\\
	  Integrating \eqref{eqn:weak3_delta} from $0$ to $T$ and
	    using \eqref{eqn:conv1b_delta}, \eqref{eqn:conv3a_delta}, \eqref{eqn:conv7d_delta} and
	    the convergence
	  $\int_0^T\delta\langle A u_\delta,\zeta\rangle_{H^2}\dt$ $\to 0$ due to
	    Lemma \ref{lemma:apriori_delta} (ii) we conclude
	  \begin{align}
	    \label{eqn:balanceIdentity}
	      \int_0^T\langle \partial_{tt} u,\zeta\rangle_{H^2}\dt+\int_{\Omega_T} W_{,e}(c,\e(u),z):\e(\zeta)\dxt=\int_{\Omega_T} l\cdot\zeta\dxt
	  \end{align}
	  for all $\zeta\in L^\infty(0,T;H_{\Gamma_\mathrm{D}}^2(\Omega;\R^n))$.
		Therefore, \eqref{eqn:weak3} is true for all $\zeta\in H_{\Gamma_\mathrm{D}}^2(\Omega;\break\R^n)$ and a.e. $t\in(0,T)$.
		Using the density of the set $H_{\Gamma_\mathrm{D}}^2(\Omega;\R^n)$ in $H_{\Gamma_\mathrm{D}}^1(\Omega;\R^n)$ (here we need the assumption that the boundary parts
		$\Gamma_\mathrm{D}$ and $\Gamma_\mathrm{N}$ have finitely many path-connected components, see \cite{Ber11}),
		we can identify
		$\partial_{tt}u(t)\in (H_{\Gamma_\mathrm{D}}^1(\Omega;\R^n))^*$
		and \eqref{eqn:weak3} is true for all $\zeta\in H_{\Gamma_\mathrm{D}}^1(\Omega;\R^n)$ and a.e. $t\in(0,T)$.
		Furthermore, $\partial_{tt}u\in L^\infty(0,T;(H_{\Gamma_\mathrm{D}}^1(\Omega;\R^n))^*)$.
	\item (Variational inequality for $z$)\\
	  The variational inequality can be shown as in \cite{HK13_1}. We choose the following cluster points with respect to a subsequence:
	  \begin{align}
	    \chi_\delta:=\chi_{\{z_\delta>0\}}
			&\to\chi&&\text{weakly-star in }L^\infty(\Omega_T),\\
	    \eta_\delta:=\chi_{\{z_\delta=0\}\cap\{W_{,z}(c_\delta, \e(u_\delta),z_\delta)+f'(z_\delta)\leq 0\}}
			&\to\eta&&\text{weakly-star in }L^\infty(\Omega_T),\\
	    \label{eqn:weakConv15}
	    F_\delta:=\chi_{\{z_\delta>0\}}\sqrt{\frac{\mathbf C'(z_\delta)}{2}}(\e(u_\delta)-e^*(c_\delta))&\to F&&\text{weakly in }L^2(\Omega_T;\R^{n\times n}),\\
	    G_\delta:=\chi_{\{z_\delta=0\}\cap\{W_{,z}(c_\delta,\e(u_\delta),z_\delta)+f'(z_\delta)\leq 0\}}\times\\
	    \qquad\times\sqrt{\frac{\mathbf C'(z_\delta)}{2}}(\e(u_\delta)-e^*(c_\delta))
				&\to G&&\text{weakly in }L^2(\Omega_T;\R^{n\times n}).
	  \end{align}
	  Note that since $\mathbf C'(\cdot)$ is symmetric and positive definite matrix, its square root exists.
	  By \eqref{eqn:conv3a_delta} and \eqref{eqn:conv7d_delta}, we obtain for a.e. $x\in\{z>0\}$
	  \begin{align}
	    \label{eqn:limitProp}
	      \chi(x)=1,\quad \eta(x)=0,\quad F(x)=\sqrt{\frac{\mathbf C'(z(x))}{2}}(\e(u)(x)-e^*(c(x))),\quad G(x)=0
	  \end{align}
		because of the following arguments:
		
	  Let $\zeta\in L^2(\Omega_T;\R^{n\times n})$ with $\mathrm{supp}(\zeta)\subseteq\{z>0\}$.
		Then, by \eqref{eqn:conv7d_delta}, we obtain $\mathrm{supp}(\zeta)\subseteq\{z_\delta>0\}$ for all sufficiently small $\delta>0$.
		By \eqref{eqn:weakConv15}, we find
		$$\int_{\Omega_T}F_\delta:\zeta\dxt\to\int_{\Omega_T}F:\zeta\dxt.$$
		On the other hand, by \eqref{eqn:conv3a_delta}, (note that $\delta$ can be chosen arbitrarily small)
		\begin{align*}
			\int_{\Omega_T}F_\delta:\zeta\dxt&=\int_{\Omega_T}\sqrt{\frac{\mathbf C'(z_\delta)}{2}}(\e(u_\delta)-e^*(c_\delta)):\zeta\dxt\\
			&\to\int_{\Omega_T}\sqrt{\frac{\mathbf C'(z)}{2}}(\e(u)-e^*(c)):\zeta\dxt
		\end{align*}
		Thus,
		$$
			\int_{\Omega_T}\sqrt{\frac{\mathbf C'(z)}{2}}(\e(u)-e^*(c)):\zeta\dxt=\int_{\Omega_T}F:\zeta\dxt
		$$
		The other identities in \eqref{eqn:limitProp} follow analogously.
	
	Now let $\zeta\in L^\infty(0,T;W_-^{1,p}(\Omega))$.
	Taking \eqref{eqn:xiDef} into account, inequality \eqref{eqn:weak2} becomes by integration over time
	\begin{align*}
		0\leq{}&\int_{\Omega_T}\bl|\nabla z_\delta|^{p-2}\nabla z_\delta\cdot\nabla\zeta+\partial_t z_\delta\zeta\br\dxt\\
			&+\int_{\{z_\delta>0\}}\bl W_{,z}(c_\delta,\e(u_\delta),z_\delta)+f'(z_\delta)\br\zeta\dxt\\
			&+\int_{\{z_\delta=0\}\cap \{W_{,z}(c_\delta,\e(u_\delta),z_\delta)+f'(z_\delta)\leq 0\}}\bl
			W_{,z}(c_\delta,\e(u_\delta),z_\delta)+f'(z_\delta)\br\zeta\dxt.
	\end{align*}
	Applying $\limsup_{\delta\searrow0}$ on both sides and multiplying by $-1$ yield
	\begin{align*}
	    0\geq{}&\lim_{\delta\searrow0}\int_{\Omega_T}\bl|\nabla z_\delta|^{p-2}\nabla z_\delta\cdot\nabla(-\zeta)+\partial_t z_\delta(-\zeta)\br\dxt\\
		    &+\liminf_{\delta\searrow0}\int_{\Omega_T}(F_\delta)^2(-\zeta)\dxt+\liminf_{\delta\searrow0}\int_{\Omega_T}\chi_\delta\,f'(z_\delta)(-\zeta)\dxt\\		    &+\liminf_{\delta\searrow0}\int_{\Omega_T}(G_\delta)^2(-\zeta)\dxt+\liminf_{\delta\searrow0}\int_{\Omega_T}\eta_\delta\,f'(z_\delta)	 (-\zeta)\dxt.
	\end{align*}
	Weakly lower semicontinuous arguments, the uniformly convergence property \eqref{eqn:conv7d_delta} and the properties listed in \eqref{eqn:limitProp} give
	\begin{align*}
	  0\geq{}&\int_{\Omega_T}\bl|\nabla z|^{p-2}\nabla z\cdot\nabla(-\zeta)+\partial_t z(-\zeta)\br\dxt\\
		 &+\int_{\{z>0\}}\bl W_{,z}(c,\e(u),z)+f'(z)\br(-\zeta)\dxt\\
		 &+\int_{\{z=0\}}\bl (F^2+G^2)+(\chi+\eta) f'(z)\br(-\zeta)\dxt.
	\end{align*}
	This inequality may also be written in the following form:
	\begin{align*}
	  0\leq{}&\int_{\Omega_T}\bl|\nabla z|^{p-2}\nabla z\cdot\nabla\zeta+\bl W_{,z}(c,\e(u),z)+f'(z)+\partial_t z\br\zeta\br\dxt\\
		 &+\int_{\{z=0\}}\bl (F^2+G^2)+(\chi+\eta) f'(z)-W_{,z}(c,\e(u),z)-f'(z)\br\zeta\dxt.
	\end{align*}
	Therefore,
	\begin{align*}
	  0\leq{}&\int_{\Omega_T}\bl|\nabla z|^{p-2}\nabla z\cdot\nabla\zeta+\bl W_{,z}(c,\e(u),z)+f'(z)+\partial_t z+\xi\br\zeta\br\dxt
	\end{align*}
	with
	$$\xi:=\chi_{\{z=0\}}\mathrm{min}\Big\{0,(F^2+G^2)+(\chi+\eta-1) f'(z)-W_{,z}(c,\e(u),z)\Big\}.$$
	This proves \eqref{eqn:weak4} and \eqref{eqn:weak5}.
			
	\item (Energy inequality)\\
	  To prove the energy inequality \eqref{eqn:weak6}, we
	  can proceed as in the proof of Theorem \ref{theorem:mainResult_delta}.
	  Integrating \eqref{eqn:weak6_delta} with respect to time on $[t_1,t_2]$ yields ($0\leq t_1\leq t_2\leq T$)
	  \begin{align}
	    \label{eqn:EIlimit}
		&\int_{t_1}^{t_2} \bl A_\delta(t)+B_\delta(t)+C_\delta(t)\br\dt\leq 0
	  \end{align}
	  with
	  \begin{align*}
	    &A_\delta(t):=\\
	    &\quad\int_\Omega\Big(\frac 1p|\nabla
			  z_\delta(t)|^p+ \frac 12|\nabla c_\delta(t)|^2+W(c_\delta, \e(u_\delta(t)),z_\delta(t))
				+f(z_\delta(t)) +\Psi(c_\delta(t))\Big)\dx\\
				&\quad-\int_\Omega\bl\frac 1p|\nabla z^0|^p+\frac 12|\nabla c^0|^2
				+W(c^0,\e(u^0),z^0)+f(z^0)+ \Psi(c^0)\br\dx\\
				&\quad+\int_\Omega\frac 12|\partial_t u_\delta(t)|^2\dx-\int_\Omega\frac 12|v^0|^2\dx
				-\int_\Omega \partial_t u_\delta(t)\cdot \partial_t b(t)\dx+\int_\Omega v^0\cdot \partial_t b^0\dx,\\
			&B_\delta(t):=
				\int_{\Omega_t} \big(|\partial_t z_\delta|^2 + \delta|\partial_t c_\delta|^2
			      + m(c_\delta,z_\delta) \nabla \mu_\delta \cdot \nabla \mu_\delta \big) \dxs \\
			   &\qquad\qquad-\int_{\Omega_t} W_{,e}(c_\delta, \e(u_\delta),z_\delta):\e(\partial_t b)\dxs
			    +\int_{\Omega_t}\partial_{t}u_\delta\cdot\partial_{tt} b\dxs\\
			    &\qquad\qquad-\int_{\Omega_t}l\cdot (\partial_t u_\delta-\partial_t b)\dxs,\\
	    &C_\delta(t):=\frac\delta2\langle A u_\delta(t),u_\delta(t)\rangle_{H^2}-\frac\delta2\langle Au^0,u^0\rangle_{H^2}
			      -\delta\int_0^t \langle Au_\delta(t),\partial_t b(t)\rangle_{H^2}\dt.
	\end{align*}
	Let $A$ be the corresponding integral expression to $A_\delta$, where $c_\delta$, $u_\delta$ and $z_\delta$
	are replaced by $c$, $u$ and $z$, respectively.
	Furthermore, let
	\begin{align*}
		B(t):={}&\int_{\Omega_t} \big(|\partial_t   z|^2 + m(c,z ) \nabla \mu  \cdot \nabla \mu  \big) \dxs
		-\int_{\Omega_t} W_{,e}(c,\e(u ),z ):\e(\partial_t b)\dxs\\
		& +\int_{\Omega_t}\partial_{t}u \cdot\partial_{tt} b\dxs-\int_{\Omega_t}l\cdot (\partial_t u -\partial_t b)\dxs.
	\end{align*}
	The limit passage in \eqref{eqn:EIlimit} can be performed as follows.
	\begin{itemize}
	  \item Weakly lower semi-continuity arguments show
	    $$\liminf_{\delta\searrow0}\int_{t_1}^{t_2} A_\delta(t)\dt\geq\int_{t_1}^{t_2} A(t)\dt.$$
	  \item Fatou's lemma and weakly lower semicontinuous arguments for $\nabla \mu_\delta$ as
	  well as the convergence property for $c_\delta$,  $u_\delta$, $z_\delta$
		(see  \eqref{eqn:conv1a_delta},  \eqref{eqn:conv1b_delta},   \eqref{eqn:conv3a_delta}, \eqref{eqn:conv7a_delta}, \eqref{eqn:conv7b_delta}) show (cf.~\eqref{eqn:Fatou1}-\eqref{eqn:Fatou3})
	    $$\liminf_{\delta\searrow0}\int_{t_1}^{t_2} B_\delta(t)\dt\geq\int_{t_1}^{t_2} B(t)\dt.$$
	  \item We have
	    $$ C_\delta(t)\geq -\frac\delta2\langle Au^0,u^0\rangle_{H^2}
		-\delta\|u_\delta(t)\|_{H^2(\Omega;\R^n)}\|\partial_t b(t)\|_{H^2(\Omega;\R^n)}. $$
	    By Lemma \ref{lemma:apriori_delta} (ii), we obtain
	    $$ \liminf_{\delta\searrow0}\int_{t_1}^{t_2} C_\delta(t)\dt\geq 0.$$
	\end{itemize}
	We end up with $\int_{t_1}^{t_2} A(t)+B(t)\dt\leq 0$ for all $0\leq t_1\leq t_2\leq T$.
	This proves \eqref{eqn:weak6}.
	\end{itemize}
	
	Putting all steps together, Theorem \ref{theorem:mainResult} is proven.\ep
                        
\begin{scriptsize} 

\bibliographystyle{alpha}
\bibliography{references} 

\end{scriptsize}

\end{document}